\newtheorem{theorem}{Theorem}[section]
\newtheorem{lemma}[theorem]{Lemma}
\newtheorem{proposition}[theorem]{Proposition}
\newtheorem{example}[theorem]{Example}
\theoremstyle{remark}
\newcommand{\vectornorm}[1]{\left\|#1\right\|}
\newcommand{\R}{\mathbb{R}}
\DeclareMathOperator{\diag}{diag}
\DeclareMathOperator{\trace}{trace}
\DeclareMathOperator{\tr}{tr}
\begin{document}
	
\title{On Solving the Quadratic Shortest Path Problem}
\author{Hao Hu \thanks{CentER, Department of Econometrics and OR, Tilburg University, The Netherlands, {\tt h.hu@uvt.nl}}
	\and {Renata Sotirov \thanks{Department of Econometrics and OR, Tilburg University, The Netherlands, {\tt r.sotirov@uvt.nl}}} }
\date{}

\maketitle

\begin{abstract}
The quadratic shortest path problem is the problem of finding a path in a directed graph such
that the sum of interaction costs over all pairs of arcs on the path is minimized.
We derive several semidefinite programming relaxations for the quadratic shortest path problem
with a matrix variable of order $m+1$, where $m$ is the number of arcs in the graph.
We use the  alternating direction method of multipliers to solve the semidefinite programming  relaxations.
Numerical results show that our bounds are currently  the strongest bounds for the quadratic shortest path problem.

We also present computational results on solving the quadratic shortest path problem using a branch and bound algorithm.
Our  algorithm computes a semidefinite programming bound in each node of the search tree,
and solves instances with up to 1300 arcs in less than an hour (!).
\end{abstract}

\noindent {\bf Keywords}: quadratic shortest path problem, semidefinite programming,  alternating direction method  of multipliers, branch and bound

\section{Introduction}

The quadratic shortest path problem (QSPP) is the problem of finding a path in a directed graph  from the source vertex $s$ to the target vertex $t$
such that the sum of costs of arcs and the sum of interaction  costs over all distinct pairs of arcs on the path is minimized.
The QSPP is a NP-hard combinatorial optimization problem, see \cite{hu2016qspp,rostami2016quadratic}.  Rostami et al.~\cite{rostami2016quadratic} show that the problem remains NP-hard
even for the adjacent QSPP. That is a variant of the QSPP where the interaction costs of all non-adjacent arcs are equal to zero.
Hu and Sotirov~\cite{hu2016qspp} give an alternative proof for the same result using a  simple reduction from the arc-disjoint paths problem.

It is also known that the QSPP can be solved efficiently for particular families of graphs and/or for special cost matrices.
In particular, Rostami et al.~\cite{frey2015quadratic} provide a polynomial time algorithm for the adjacent QSPP considered on directed acyclic graphs.
Hu and Sotirov~\cite{hu2016qspp} show that the QSPP can be efficiently solved if the cost matrix is a non-negative symmetric product matrix,
or if the cost matrix is a sum matrix and every $s$-$t$ path in the graph has constant length.
In \cite{hu2016qspp}, it is also shown that the linearizability of the QSPP on grid graphs can be detected in polynomial-time.
We say that an instance of the QSPP is linearizable if its optimal solution can be found by solving the corresponding instance of the shortest path problem.
The algorithm from \cite{hu2016qspp} verifies whether a QSPP instance on  the $p \times q$ grid graph
is linearizable in $\mathcal{O}(p^{3}q^{2}+p^{2}q^{3})$ time, and if it is linearizable the algorithm returns the linearization vector.

Buchheim and Traversi \cite{buchheim2015quadratic} study separable underestimators that can be used to solve binary programs with a quadratic objective function. In particular, they provide an exact approach for the quadratic shortest path problem, which  solves instances on the $15 \times 15$ grid graph within one and a half hour.
Rostami et al.~\cite{rostami2016quadratic} present several lower bounding approaches for the QSPP, including a Glimore-Lawler
(GL) type bound  and a bound based on a reformulation scheme that iteratively improves the GL bound. We refer to the latter bound as RBB.
The numerical results in \cite{rostami2016quadratic} show that the branch-and-bound algorithm, which computes the RBB bound in each node of the tree,
provides an optimal solution for the QSPP with a  dense cost matrix   on the $15 \times 15$ grid graph within $96$ seconds.

The QSPP  arises in many different applications such as  route-planning problems in which the choice of a route is based on the mean as well as the variance of the path travel-time, see \cite{Sen:01}.
In \cite{NieWu, sivakumar1994variance}, the authors study several variants of the shortest path problem that are related to the QSPP, including the reliable shortest path problem
and a variance-constrained shortest path problem. The QSPP also plays a role in network protocols. In particular,  different restoration schemes of survivable asynchronous
transfer mode networks can be formulated as a QSPP, see Murakami and Kim \cite{murakami1997comparative}. Gourv{\`e}s et al.~\cite{gourves2009minimum}
consider the QSPP on undirected edge-colored graphs with non-negative reload costs.
The edge-colored graphs are for example used to model cargo transportation and large communication networks, see \cite{galbiati2008complexity, wirth2001reload}.
The QSPP can be also applied in satellite network designs as discussed in \cite{gamvros2006satellite}. \\

\noindent
\textbf{Main results and outline.} \\
In this paper we derive several semidefinite programming (SDP) relaxations with increasing complexity, for the quadratic shortest path problem.
The matrix variables in the SDP relaxations are of order $m+1$, where $m$ is the number of the arcs in the graph.
Our strongest SDP relaxation has a large number of constraints, and is difficult to solve  by an interior-point algorithm for instances of moderate size,
i.e., for graphs with more than 500 arcs.
Therefore, we implement the alternating direction method of multipliers (ADMM) to solve  the two  strongest semidefinite programming relaxations.
We adopt  the ADMM version of the algorithm suited for solving SDP relaxations  that was recently introduced by Oliveira, Wolkowicz and  Xu \cite{oliveira2015admm}.
The ADMM-based algorithm  computes our strongest SDP bound on a graph with 480 arcs in about one minute, while an interior-point algorithm needs 45 minutes.
The ADMM algorithm requires at most 46 minutes to compute the strongest SDP bound for an instance of the QSPP problem with 2646 arcs.

In order to incorporate the ADMM algorithm within  a branch-and-bound (B$\&$B) framework, we show how to improve the performance of the ADMM.
In particular, we improve its performance by projecting one of the variables onto a more intricate set than in the general settings.
This turns out to be the key to efficiently  obtain good bounds in each node of the  B$\&$B algorithm.
Our B$\&$B algorithm  finds an optimal solution for the QSPP on a grid graph with 760 arcs  in about three minutes.
We solve instances of the QSPP with 1300 arcs in less than an hour. On the other hand, Cplex can solve instances with less than $365$ arcs.

The paper is structured as follows.
In Section \ref{sect:probl_form}, we provide an integer programming formulation of the quadratic shortest path problem, and introduce several graphs that are used in our numerical tests.
In Section \ref{section_sdpLB}, we derive three semidefinite programming relaxations for the QSPP with increasing complexity.
Section \ref{SDP-Slater} provides the Slater feasible versions of the SDP relaxations.
In the same section we show how to obtain explicit expressions of the projection matrices corresponding to the relevant graphs.
In the case that the underlying graph is acyclic and/or  every $s$-$t$ path has the same length, feasible points in the SDP relaxations satisfy certain properties,
which we present  in Section \ref{sect:Special graphs}.
We outline the main features of the ADMM algorithm for the SDPs from \cite{oliveira2015admm} in  Section \ref{section_admm}.
Our tailored version of the ADMM algorithm is given in Section \ref{sect:FastADMM}.
Section \ref{sect:NumerExper} provides computational results on various instances.

\section{Problem formulation} \label{sect:probl_form}

Let $G = (V,A)$ be a directed graph with vertex set $V$, $|V|=n$, and arc set $A$, $|A|=m$.
A path is defined as an ordered set of vertices
${(v_{1},\ldots,v_{k})}$, $k> 1$ such that $(v_{i},v_{i+1})\in A$ for $i=1,\ldots,k-1$, and it does not contain repeated vertices. A $s$-$t$ path is a path ${P =(v_{1},v_{2},\ldots,v_{k})}$
such that $v_{1}$ is the source vertex ${s\in V}$ and $v_{k}$ is the target vertex $t\in V$.

A natural way to model the quadratic shortest path problem using  binary variables
is to represent a $s$-$t$ path $P$ by its characteristic vector $x$.
Thus, $x \in \{0,1\}^{m}$  and $x_{e} = 1$ if and only if the arc $e$ is in the path $P$.
Let  ${Q}=(q_{e,f}) \in \R^{m \times m}$ be a nonnegative symmetric matrix  whose rows and columns are indexed by the arcs.
The sum of the off-diagonal entries $q_{e,f} + q_{f,e}$ equals  the interaction cost between arcs $e$ and $f$, $e\neq f$.
The linear cost  of an arc $e$ is given by the diagonal element $q_{e,e}$ of the matrix $Q$.
Now, the quadratic cost of a path $P$ is given as follows:
\[
\sum_{e,f \in A, ~e\neq f} q_{e,f} x_{e}x_{f} + \sum_{e\in A} q_{e,e} x_{e} = x^{\mathrm T}Qx.
\]
Let us define the path polyhedron.
The incidence matrix ${\mathcal I}$ of $G$ is a $n \times m$ matrix that  has a row for each vertex and column for each arc,
such that ${\mathcal I}_{v,e} = 1$ if the arc $e$ leaves vertex $v$, $-1$ if it enters vertex $v$, and zero otherwise.
The $i$th row of the incidence matrix is denoted by $a_{i}^{\mathrm T}$ ($i=1,\ldots,n$).
Define the vector $b\in \R^{n}$ such that $b_{i} = 1$ if $i = s$, $-1$ if $i = t$, and zero otherwise.
Now, the path polyhedron $P_{st}(G)$ is given as follows:
\begin{equation}\label{stPathPolytope}
P_{st}(G) := \{ x\in \R^{m} \; | \;  0 \leq x \leq 1  ,\;\;  a_{i}^{\mathrm T}x= b_{i}, \;\; \forall i\in V \backslash \{t\}\}.
\end{equation}
Note that the constraint $a_{t}^{\mathrm T}x= b_{t}$ is not included in $P_{st}(G)$ as it is redundant.
It is a well-known result that the extreme-points of the polyhedron $P_{st}(G)$ correspond to the characteristic vectors of the $s$-$t$ paths.

The QSPP can be modeled as the following binary quadratic programming problem:
\begin{equation}\label{quadraticQSPP}
\begin{aligned}
& \text{minimize}
& & x^{\mathrm T}Qx \\
& \text{subject to}
& &  x \in P_{st}(G) & \hspace{0.5cm} &\\
&
& & x  \text{ binary}. & \hspace{0.5cm} &
\end{aligned}
\end{equation}

Clearly,  problem \eqref{quadraticQSPP} reduces to the linear shortest path problem if $Q$ is a diagonal matrix. We next provide several graphs that are used in the remainder of the paper.
\begin{example} \label{def:GridGraphs}{\em
The \emph{grid graph}  $G_{p,q} = (V,A)$ is a directed graph whose vertex and edge sets are given as follows:
\begin{align*}
V &= \{ v_{i,j} \;|\; 1 \leq i \leq p, \; 1 \leq j \leq q \}, \\
A & = \{ (v_{i,j},v_{i',j'}) \;|\; |i-i'|+|j-j'| = 1, \; i'\geq i, \; j' \geq j  \}.
\end{align*}
\noindent
Note that $|V| = pq$ and $|A| = 2p q-p-q$. Unless specified otherwise, we assume that the source vertex is $v_{1,1}$ and the target vertex is $v_{p,q}$.
Thus, all vertices except $v_{1,1}$ and $v_{p,q}$ are transshipment vertices. Every $s$-$t$ path in $G_{p,q}$ has the same length.}
\end{example}

\begin{example} \label{def:GridGraphs2}{\em
The \emph{flow grid graph} $G^f_{p,q}=(V,A)$
consists of transshipment vertices forming the $p \times q$ grid  as well as two extra vertices; a source vertex $s$ and a target vertex $t$.
Arcs between vertices on the grid are given as in Example \ref{def:GridGraphs}.
Additionally, there are $p$ arcs from $s$ to the vertices in the first column of the grid, and $p$ arcs from the last column of the grid to $t$.	
Note that there are  $pq+2$ vertices and $2pq+p-q$ arcs in $G^f_{p,q}$.}
\end{example}

\begin{example} 	\label{def:GridGraphs3} {\em
The \emph{double-directed grid graph} $\bar{G}_{p,q}=(V,A)$   has the same vertex set as the grid graph $G_{p,q}$.
The arc set of $\bar{G}_{p,q}$ is given as follows ${A  = \{ (v_{i,j},v_{i',j'}) \;|\; |i-i'|+|j-j'| = 1 \}.}$ Note that $|V| = pq$ and $|A| = 4pq-2p-2q$.}
\end{example}

\begin{example} \label{def:Kpartite} {\em
An \emph{incomplete $K$-partite graph} $G_{K}=(V,A)$  is a directed graph whose  vertices are partitioned into $K$ disjoint sets $V_1$, \ldots, $V_K$,
such that no two vertices within the same set are adjacent,  and every  vertex in $V_i$ is adjecent to every vertex in $V_{i+1}$ ($i=1,\ldots, K-1$).
In particular, we have that $(u,v)\in A$ for $u\in V_i$ and $v\in V_{i+1}$ where  $i=1,\ldots, K-1$.
}
\end{example}

\section{SDP relaxations for the QSPP} \label{section_sdpLB}

In this section, we derive three SDP relaxations for the QSPP with increasing complexity.
Our strongest relaxation has $m+n$ equalities and ${m \choose 2}$ non-negativity constraints.

In order to derive an SDP relaxation for the QSPP, we linearize the objective function $\trace(x^{\mathrm T}Qx) = \trace(Qxx^{\mathrm T})$ by replacing $xx^{\mathrm T}$ by a new variable $X \in \mathcal{S}^{m}$. Here, $\mathcal{S}^{m}$ denotes the set of symmetric matrices of order $m$. Clearly, for ${x \in P_{st}(G) \cap \{0,1\}^{m}}$, we have that $X = \diag(X)\diag(X)^{\mathrm T}$. Now, we weaken the constraint $X - \diag(X)\diag(X)^{\mathrm T} = 0$ to $X - \diag(X)\diag(X)^{\mathrm T} \succeq 0$ which is known to be equivalent to the constraints $\begin{pmatrix}
X & x \\
x^{\mathrm T} & 1
\end{pmatrix} \succeq 0$   and $\diag(X) = x$. This yields to our first SDP relaxation $SDP_{0}$ as follows.
\begin{equation}\label{SDP0}
SDP_{0}\begin{cases}
\begin{aligned}
&\text{minimize}
& & \langle Q, X \rangle\\
& \text{subject to}
& &  a_{i}^{\mathrm T}x= b_{i}, & \hspace{0.5cm} & \forall \: i \in V \backslash \{t\}\\
&
& &  \diag(X) = x, & \hspace{0.5cm} &\\
&
& &  \begin{pmatrix}
X & x \\
x^{\mathrm T} & 1
\end{pmatrix} \succeq 0. & \hspace{0.5cm} &\\
\end{aligned}
\end{cases}
\end{equation}

Here $\langle \cdot , \cdot \rangle$ denotes the trace inner product.
We show how to strengthen $SDP_{0}$ by introducing the so-called squared linear constraints.
As its name suggests, the additional constraints come from the products of the linear constraints.
Consider two linear constraints $a_{i}^{\mathrm T}x = b_{i}$ and $a_{j}^{\mathrm T}x = b_{j}$ associated with the vertices $i,j \in V \backslash \{t\}$,
the product of these two constraints is
$b_{i}  b_{j} = (a_{i}^{\mathrm T}x)  (x^{\mathrm T}a_{j}) = \langle a_{i}a_{j}^{\mathrm T}, xx^{\mathrm T} \rangle = \langle a_{j}a_{i}^{\mathrm T}, xx^{\mathrm T} \rangle$.
 Thus $\langle a_{i}a_{j}^{\mathrm T}, X \rangle = b_{i}  b_{j}$ is a valid constraint for the program $\eqref{SDP0}$.

The following result shows two properties of the squared linear constraints.
\begin{lemma}
Let $(X,x)$ satisfies
$\begin{pmatrix}
	X & x \\
	x^{\mathrm T} & 1
	\end{pmatrix} \succeq 0$, $\diag(X) = x$, and ${\langle a_{i}a_{i}^{\mathrm T}, X \rangle = b_{i}^{2}}$ for $i \in V \backslash \{t\}$.
Then

	\begin{enumerate}[topsep=0pt,itemsep=-1ex,partopsep=1ex,parsep=2ex,label=(\roman*)]
		\item the constraint $a_{i}^{\mathrm T}x = b_{i}$ is redundant for every $i \in V\backslash \{s,t\}$;
		\item the constraint $\langle a_{i}a_{j}^{\mathrm T}, X\rangle = b_{i} b_{j}$ is redundant for $i,j \in V \backslash \{t\}$ and $i \neq j$.
	\end{enumerate}
\end{lemma}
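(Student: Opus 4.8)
The plan is to extract everything from the single relation $M:=\begin{pmatrix} X & x \\ x^{\mathrm T} & 1 \end{pmatrix}\succeq 0$, together with the elementary fact that a positive semidefinite matrix $M$ and a vector $v$ with $v^{\mathrm T}Mv=0$ must satisfy $Mv=0$ (write $M=B^{\mathrm T}B$; then $\|Bv\|^2=0$ forces $Bv=0$, hence $Mv=B^{\mathrm T}Bv=0$). Note that, by definition of $b$, the vertices in $V\setminus\{s,t\}$ are exactly the ones with $b_i=0$.

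First I would treat the transshipment vertices. Fix $i\in V\setminus\{s,t\}$, so that $b_i=0$ and the hypothesis $\langle a_ia_i^{\mathrm T},X\rangle=b_i^2$ becomes $a_i^{\mathrm T}Xa_i=0$. Applying the fact above to $M$ and the vector $v=\begin{pmatrix} a_i \\ 0\end{pmatrix}\in\R^{m+1}$ — for which $v^{\mathrm T}Mv=a_i^{\mathrm T}Xa_i=0$ — gives $Mv=0$. Reading off its two blocks yields simultaneously $Xa_i=0$ and $a_i^{\mathrm T}x=0$. The latter is precisely part (i), since $b_i=0$; the former, $Xa_i=0$ for every transshipment vertex $i$, is the tool for part (ii).

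For part (ii), take $i,j\in V\setminus\{t\}$ with $i\neq j$. The only vertex of $V\setminus\{t\}$ that is not a transshipment vertex is $s$, so at least one of $i,j$ belongs to $V\setminus\{s,t\}$; since $\langle a_ia_j^{\mathrm T},X\rangle=\langle a_ja_i^{\mathrm T},X\rangle$ I may assume it is $j$. Then $b_j=0$ and, by the previous step, $Xa_j=0$, so $\langle a_ia_j^{\mathrm T},X\rangle=a_i^{\mathrm T}(Xa_j)=0=b_ib_j$, which is part (ii).

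I do not expect a genuine obstacle: once the problem is phrased through $M\succeq 0$, each claim is essentially one line. The two points that need a little care are noticing that the last coordinate of $Mv=0$ delivers the linear identity in (i) at no extra cost, and the combinatorial remark used in (ii) that among any two distinct vertices of $V\setminus\{t\}$ at least one is a transshipment vertex — which is what lets the single fact $Xa_j=0$ cover all the cases uniformly.
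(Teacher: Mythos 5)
Your proof is correct. The only place where you diverge from the paper is part (i): the paper forms the Schur complement $Z = X - xx^{\mathrm T} \succeq 0$, splits $\langle a_ia_i^{\mathrm T}, X\rangle = \langle a_ia_i^{\mathrm T}, Z\rangle + (a_i^{\mathrm T}x)^2$ into two nonnegative terms whose sum is zero, and concludes $(a_i^{\mathrm T}x)^2 = 0$; you instead apply the fact that a PSD matrix annihilates any vector on which its quadratic form vanishes to the bordered matrix $M$ with the padded vector $(a_i^{\mathrm T},0)^{\mathrm T}$, and read off $Xa_i=0$ and $a_i^{\mathrm T}x=0$ simultaneously from the block structure. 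For part (ii) your argument coincides with the paper's: both reduce, after the same WLOG based on the observation that at most one of $i,j$ can be $s$, to $Xa_j=0$ and hence $\langle a_ia_j^{\mathrm T},X\rangle = a_i^{\mathrm T}Xa_j = 0$. Your unified treatment is slightly more economical — one application of the kernel fact yields both (i) and the ingredient for (ii) — and it is in fact the same mechanism the paper itself uses later in Lemma 4.1 (with the vectors $(a_i^{\mathrm T},-b_i)^{\mathrm T}$) to exhibit the null space of feasible $Y$; the paper's Schur-complement route for (i) buys nothing extra here, so the two proofs are essentially interchangeable.
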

\begin{proof}
$(i)$ From the assumption, we have that $Z = X - xx^{\mathrm T} \succeq 0$ and $0 \leq x \leq 1$. From the squared linear constraint $\langle a_{i}a_{i}^{\mathrm T}, X\rangle = 0$ for $i \in V \backslash \{s,t\}$, we have
	\begin{align*}
	0 = \langle a_{i}a_{i}^{\mathrm T}, X\rangle = \langle a_{i}a_{i}^{\mathrm T}, Z + xx^{\mathrm T}\rangle  = \langle a_{i}a_{i}^{\mathrm T}, Z \rangle + \langle a_{i}a_{i}^{\mathrm T}, xx^{\mathrm T}\rangle.
	\end{align*}
	Thus,  $\langle a_{i}a_{i}^{\mathrm T}, xx^{\mathrm T}\rangle = - \langle a_{i}a_{i}^{\mathrm T}, Z \rangle \leq 0$ as $Z \succeq 0$.
However,  we also have $\langle a_{i}a_{i}^{\mathrm T}, xx^{\mathrm T}\rangle  = (a_{i}^{\mathrm T}x)^{2} \geq 0$. This implies that $a_{i}^{\mathrm T}x = 0$ for every $i \in V \backslash \{s,t\}$.
	
$(ii)$ Without loss of generality, we assume $i \neq s$ and thus $b_{i} = 0$.
As $X \succeq 0$ and $\langle a_{i}a_{i}^{\mathrm T}, X\rangle = 0$ from the assumption, it holds that $Xa_{i} = 0$ and thus $\langle a_{i}a_{j}^{\mathrm T}, X\rangle = 0$ is satisfied.
\end{proof}

The above lemma motivates us to construct the following SDP relaxation for the quadratic shortest path problem.
\begin{equation}\label{SDP_L}
(SDP_{L})\begin{cases}
\begin{aligned}
&\text{minimize}
& & \langle Q, X \rangle\\
& \text{subject to}
& &  a_{s}^{\mathrm T}x = b_{s}, & \hspace{0.5cm} &\\
&
& &  \diag(X) = x, & \hspace{0.5cm} &\\
&
& &  \begin{pmatrix}
X & x \\
x^{\mathrm T} & 1
\end{pmatrix} \succeq 0, & \hspace{0.5cm} &\\
&
& &  \langle a_{i}a_{i}^{\mathrm T}, X\rangle = b_{i}^2, & \hspace{0.5cm} & \forall \: i \in V \backslash \{t\}.\\
\end{aligned}
\end{cases}
\end{equation}

We can further strengthen $SDP_{L}$ by adding the non-negativity constraints $X \geq 0$.
This leads us to the following SDP relaxation:
 \begin{equation}\label{SDP_NL}
(SDP_{NL})\begin{cases}
\begin{aligned}
&\text{minimize}
& & \langle Q, X \rangle\\
& \text{subject to}
& &  a_{s}^{\mathrm T}x = b_{s}, & \hspace{0.5cm} &\\
&
& &  \diag(X) = x, & \hspace{0.5cm} &\\
&
& &  \begin{pmatrix}
X & x \\
x^{\mathrm T} & 1
\end{pmatrix} \succeq 0, & \hspace{0.5cm} &\\
&
& &  \langle a_{i}a_{i}^{\mathrm T}, X\rangle = b_{i}^2, & \hspace{0.5cm} & \forall \: i \in V\backslash\{t\},\\
&
& & X \geq 0 .& \hspace{0.5cm} & \\
\end{aligned}
\end{cases}
\end{equation}

Recall that $a_{t}^{\mathrm T}x = b_{t}$ is also a valid, redundant constraint for the polytope $P_{st}(G)$.
A natural question is whether the squared linear constraints induced by some redundant constraint,
e.g., $a_{t}^{\mathrm T}x = b_{t}$ tighten our relaxation? Also, may  constraints  of type
$\langle a_{i}a_{t}^{\mathrm T}, X\rangle = b_{i}  b_{t}$ ($i \in V$) further tighten  $SDP_{NL}$? The next result shows that the answer is negative.


\begin{lemma}\label{SLC_redundant}
Let $\bar{a}^{\mathrm T}x = \bar{b}$ be a redundant constraint for the path polyhedron $\eqref{stPathPolytope}$
where $\bar{a}=\sum_{i \neq t}y_{i}a_{i}$ and $\bar{b} = y^{\mathrm T}b = y_{s}$ for some $y \in \R^{n-1}$.
 Then, the  squared linear constraints
 $${\langle \bar{a}\bar{a}^{\mathrm T}, X\rangle = \bar{b}^{2}},  \text{ and }   \hspace{0.1cm} \langle a_{i}\bar{a}^{\mathrm T}, X\rangle
 =  b_{i} \bar{b}  \hspace{0.1cm} \text{ for }  \hspace{0.1cm} i \in V\backslash\{t\}$$ are redundant in the SDP relaxation $(\ref{SDP_L})$.
\end{lemma}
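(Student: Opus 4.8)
We are given $(X,x)$ satisfying the constraints of $(\ref{SDP_L})$: namely $a_s^{\mathrm T}x = b_s$, $\diag(X) = x$, the PSD condition $\begin{pmatrix} X & x \\ x^{\mathrm T} & 1\end{pmatrix}\succeq 0$, and $\langle a_ia_i^{\mathrm T},X\rangle = b_i^2$ for all $i\in V\setminus\{t\}$. We must show the two families of squared constraints involving $\bar a = \sum_{i\neq t}y_ia_i$ and $\bar b = y_s$ are automatically satisfied. The natural tool is the observation already used in the proof of the previous lemma: the PSD condition implies $Z := X - xx^{\mathrm T}\succeq 0$, and combined with $\diag(X) = x$ and $0\le x\le 1$ the constraint $\langle a_ia_i^{\mathrm T},X\rangle = b_i^2$ says $\langle a_ia_i^{\mathrm T},Z\rangle = 0$ (since $\langle a_ia_i^{\mathrm T},xx^{\mathrm T}\rangle = (a_i^{\mathrm T}x)^2 = b_i^2$ by part $(i)$ of the previous lemma, which gives $a_i^{\mathrm T}x = b_i$ for all $i\neq t$). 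Because $Z\succeq 0$, $\langle a_ia_i^{\mathrm T},Z\rangle = a_i^{\mathrm T}Za_i = 0$ forces $Za_i = 0$ for every $i\in V\setminus\{t\}$.

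**The two identities.**

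First I would establish $Z\bar a = 0$: since $\bar a = \sum_{i\neq t}y_ia_i$ and $Za_i = 0$ for each $i\neq t$, linearity gives $Z\bar a = \sum_{i\neq t}y_iZa_i = 0$. Likewise $\bar a^{\mathrm T}Z\bar a = 0$ and $a_i^{\mathrm T}Z\bar a = 0$ for every $i\in V\setminus\{t\}$. Then I would expand the two target constraints by writing $X = Z + xx^{\mathrm T}$:
\begin{align*}
\langle\bar a\bar a^{\mathrm T},X\rangle &= \bar a^{\mathrm T}Z\bar a + \bar a^{\mathrm T}xx^{\mathrm T}\bar a = 0 + (\bar a^{\mathrm T}x)^2,\\
\langle a_i\bar a^{\mathrm T},X\rangle &= a_i^{\mathrm T}Z\bar a + a_i^{\mathrm T}xx^{\mathrm T}\bar a = 0 + (a_i^{\mathrm T}x)(\bar a^{\mathrm T}x).
\end{align*}
It remains to identify $\bar a^{\mathrm T}x$. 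Using $a_i^{\mathrm T}x = b_i$ for all $i\neq t$ (part $(i)$ of the previous lemma), we get $\bar a^{\mathrm T}x = \sum_{i\neq t}y_ia_i^{\mathrm T}x = \sum_{i\neq t}y_ib_i = y^{\mathrm T}b = \bar b$. Substituting, $\langle\bar a\bar a^{\mathrm T},X\rangle = \bar b^2$ and $\langle a_i\bar a^{\mathrm T},X\rangle = b_i\bar b$, which is exactly what we wanted.

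**Anticipated obstacle.**

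The only delicate point is justifying that $\langle a_ia_i^{\mathrm T},X\rangle = b_i^2$ together with the PSD condition actually yields $Za_i = 0$, i.e.\ that the quadratic form vanishes on $a_i$ and hence, by positive semidefiniteness, the vector $a_i$ lies in the kernel of $Z$. This is standard (a PSD matrix $Z$ with $v^{\mathrm T}Zv = 0$ satisfies $Zv = 0$, e.g.\ via the Cholesky/square-root factorization $Z = W^{\mathrm T}W$, so $\|Wv\|^2 = 0$), and for $i\ne s,t$ it was essentially shown in part $(i)$ of the preceding lemma; for $i = s$ the same argument applies with $b_s = 1$. After that, everything is linear algebra: extending kernel membership from the generators $a_i$ to the linear combination $\bar a$, and evaluating $\bar a^{\mathrm T}x$ via the (now-redundant but valid) equalities $a_i^{\mathrm T}x = b_i$. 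I would also remark that the hypothesis $\bar a^{\mathrm T}x = \bar b$ being a redundant constraint for $P_{st}(G)$ is precisely the algebraic statement $\bar a = \sum_{i\neq t}y_ia_i$, $\bar b = y^{\mathrm T}b$, so no separate geometric argument about the polyhedron is needed.
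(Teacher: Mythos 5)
Your proof is correct and fills in exactly the ``direct verification'' the paper leaves implicit: the key facts you derive ($a_i^{\mathrm T}x=b_i$ for $i\neq t$ and $Za_i=0$, hence $Z\bar a=0$ for the slack $Z=X-xx^{\mathrm T}$) are the same mechanism underlying Lemma 3.1, whose parts (i)--(ii) the paper's intended verification invokes when expanding $\langle\bar a\bar a^{\mathrm T},X\rangle=\sum_{i,j\neq t}y_iy_j\langle a_ia_j^{\mathrm T},X\rangle=\bar b^{2}$. So this is essentially the paper's route, with the cross-term identities rederived via kernel membership rather than cited.
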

\begin{proof} By direct verification.
\end{proof}

It is not difficult to verify that  \eqref{SDP_L} and \eqref{SDP_NL} do not not satisfy the Slater constraint qualification.
Therefore, we derive in the following section the  Slater feasible versions of the relaxations.

\section{The Slater feasible versions of the SDP relaxations} \label{SDP-Slater}

In this section, we provide the Slater feasible versions of the SDP relaxations \eqref{SDP_L} and \eqref{SDP_NL}.
In Section \ref{slater_directedgridgraphs},  we  derive an explicit expression for the projection matrix corresponding to
the grid graph (resp.~flow grid graph) described in Example \ref{def:GridGraphs} (resp.~Example \ref{def:GridGraphs2}),

The following lemma shows that the Slater constraint qualification does not hold for the SDP relaxation (\ref{SDP_L}).
\begin{lemma}\label{Slater:nullspaceofY}
Let  $Y = \begin{pmatrix}
X & x \\
x^{\mathrm T} & 1
\end{pmatrix}$
be a feasible solution of the SDP relaxation $SDP_{L}$.
Then
\[
\mathrm{span}\{(
a_{i}^{\mathrm T}, -b_{i})^{\mathrm T} \;|\; i \in V \backslash \{t\} \}  \subseteq  \mathrm{Null}(Y).
\]
\end{lemma}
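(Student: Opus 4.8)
The plan is to show that each spanning vector $u_i := (a_i^{\mathrm T},\,-b_i)^{\mathrm T} \in \R^{m+1}$, for $i \in V\setminus\{t\}$, lies in $\mathrm{Null}(Y)$; since $\mathrm{Null}(Y)$ is a linear subspace, it then automatically contains the whole span. To certify that $u_i \in \mathrm{Null}(Y)$ I would invoke the standard fact that a positive semidefinite matrix $M$ satisfies $Mv = 0$ whenever $v^{\mathrm T} M v = 0$ (factor $M = L^{\mathrm T}L$, so $v^{\mathrm T}Mv = \|Lv\|^2 = 0$ forces $Lv = 0$, hence $Mv = L^{\mathrm T}Lv = 0$). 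So it suffices to prove $u_i^{\mathrm T} Y u_i = 0$.

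The main computation is to expand the block product
\[
u_i^{\mathrm T} Y u_i = \begin{pmatrix} a_i^{\mathrm T} & -b_i \end{pmatrix}\begin{pmatrix} X & x \\ x^{\mathrm T} & 1\end{pmatrix}\begin{pmatrix} a_i \\ -b_i\end{pmatrix} = a_i^{\mathrm T}Xa_i - 2b_i\,(a_i^{\mathrm T}x) + b_i^2 = \langle a_ia_i^{\mathrm T}, X\rangle - 2b_i\,(a_i^{\mathrm T}x) + b_i^2 .
\]
Using the squared linear constraint $\langle a_ia_i^{\mathrm T}, X\rangle = b_i^2$ available in $SDP_L$, this collapses to $2b_i^2 - 2b_i\,(a_i^{\mathrm T}x) = 2b_i\bigl(b_i - a_i^{\mathrm T}x\bigr)$.

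It remains to check that this last quantity vanishes for every $i \in V\setminus\{t\}$. For a transshipment vertex $i \in V\setminus\{s,t\}$ we have $b_i = 0$, so the expression is $0$ immediately. For $i = s$ we have $b_s = 1$, and the explicit linear constraint $a_s^{\mathrm T}x = b_s$ retained in $SDP_L$ gives $b_s - a_s^{\mathrm T}x = 0$, so the expression is again $0$. Hence $u_i^{\mathrm T} Y u_i = 0$ for all $i \in V\setminus\{t\}$, which by the PSD fact above yields $Y u_i = 0$, and therefore $\mathrm{span}\{(a_i^{\mathrm T}, -b_i)^{\mathrm T} \;|\; i \in V\setminus\{t\}\} \subseteq \mathrm{Null}(Y)$.

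There is no serious obstacle; the only point needing care is that the cross term cannot be dropped at the source vertex $s$ (where $b_s \neq 0$), so one must there invoke the explicitly kept constraint $a_s^{\mathrm T}x = b_s$ rather than rely on $b_i = 0$. I would also note that this lemma is exactly what drives the facial reduction in the next section: the common null space confines every feasible $Y$ to a proper face of the PSD cone, which is the starting point for the Slater-feasible reformulation.
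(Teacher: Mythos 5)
Your proof is correct and follows essentially the same route as the paper's: rewrite the squared linear constraint as $u_i^{\mathrm T} Y u_i = 0$ and use $Y \succeq 0$ to conclude $Y u_i = 0$, hence the span lies in $\mathrm{Null}(Y)$. In fact you are more explicit than the paper, which silently absorbs the cross term $-2b_i\,a_i^{\mathrm T}x$; your observation that at $i=s$ this requires the retained constraint $a_s^{\mathrm T}x = b_s$ is exactly the detail the paper leaves implicit.
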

\begin{proof}
Take $\begin{pmatrix} a_{i} \\ -b_{i} \end{pmatrix}$ for $i \neq t$,
and note that the squared linear constraint
${\langle a_{i}a_{i}^{\mathrm T}, X\rangle = b_{i}^2}$  in $SDP_L$ can be written as
$$\langle \begin{pmatrix}
	a_{i} \\ -b_{i}
	\end{pmatrix}\begin{pmatrix}
	a_{i}^{\mathrm T} & -b_{i}
	\end{pmatrix}, \begin{pmatrix}
	X & x \\
	x^{\mathrm T} & 1
	\end{pmatrix}\rangle = 0.$$
	 As $Y \succeq 0$, we have
	$$\begin{pmatrix}
	X & x \\
	x^{\mathrm T} & 1
	\end{pmatrix}\begin{pmatrix}
	a_{i} \\ -b_{i}
	\end{pmatrix} = 0.$$
This shows $(a_{i}^{\mathrm T}, -b_{i})^{\mathrm{ T}} \in \mathrm{Null}(Y)$ for  $i\in V  \backslash \{t\}$.
	
\end{proof}


Define the following matrix formed by the vectors $(a_{i}^{\mathrm T}, -b_{i})^{\mathrm T}$:
\begin{equation}\label{slater:T}
T = \begin{pmatrix}
a_{1} & \cdots & a_{n-1} \\
-b_{1} & \cdots & -b_{n-1} \\
\end{pmatrix} \in \R^{m+1,n-1}.
\end{equation}

Note that the rank of $T$ is $n-1$.
It follows from \cite{tuncel,DrusWolko:17} that the minimal face that contains the feasible set of the SDP relaxation  $SDP_L$ is exposed by $TT^{\mathrm T}$.
 Assume $W \in \R^{m+1,m-n+2}$ is a matrix whose columns form a basis of the orthogonal complement to $T$, i.e., $W^{\mathrm T}T = 0$.
 Then,  we have that $Y = WUW^{\mathrm T}$ for some positive definite  $U \in \mathcal{S}^{m-n+2}$.
 This implies that substituting $Y = WUW^{\mathrm T}$ into \eqref{SDP_NL} yields a Slater feasible SDP relaxation for the QSPP.


In the sequel, we prove  that the following Slater feasible SDP relaxation is equivalent to $SDP_{L}$, see (\ref{SDP_L}).
\begin{equation} \label{SDPQSPP_L2Slater}
(SDP_{LS})\begin{cases}
\begin{aligned}
&\text{minimize}
& & \langle W^{\mathrm T}\hat{Q}W, U \rangle\\
& \text{subject to}
& &  \text{diag}(WUW^{\mathrm T}) = WUW^{\mathrm T}e_{m+1}, & \hspace{0.5cm} &\\
&
& &  e_{m+1}^{\mathrm T}WUW^{\mathrm T}e_{m+1} =1, & \hspace{0.5cm} &\\
&
& &  U \succeq 0. & \hspace{0.5cm} &
\end{aligned}
\end{cases}
\end{equation}
Here, $e_{m+1}$ is the last column of the $(m+1) \times (m + 1)$ identity matrix, and
$\hat{Q}= \begin{pmatrix} Q & 0\\0 & 0 \end{pmatrix} \in \mathcal{S}^{m+1}$.

\begin{proposition}
The SDP relaxations $SDP_{L}$ and $SDP_{LS}$ are equivalent.
\end{proposition}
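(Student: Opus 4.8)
The plan is to establish a bijection between feasible points of $SDP_L$ and feasible points of $SDP_{LS}$ that preserves the objective value. The key structural fact, already available to us, is Lemma \ref{Slater:nullspaceofY}: any feasible $Y$ of $SDP_L$ satisfies $\mathrm{span}\{(a_i^{\mathrm T},-b_i)^{\mathrm T} : i \in V\setminus\{t\}\} = \mathrm{Range}(T) \subseteq \mathrm{Null}(Y)$, while $Y \succeq 0$. Consequently $Y$ vanishes on $\mathrm{Range}(T)$ and is supported on the orthogonal complement $\mathrm{Range}(W)$, so $Y = WUW^{\mathrm T}$ for a unique $U \in \mathcal{S}^{m-n+2}$ with $U \succeq 0$ (uniqueness because $W$ has full column rank, so $U = (W^{\mathrm T}W)^{-1}W^{\mathrm T}YW(W^{\mathrm T}W)^{-1}$). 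This gives the forward map; conversely, for any $U \succeq 0$, the matrix $Y := WUW^{\mathrm T}$ is positive semidefinite.

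Next I would translate each constraint of $SDP_L$ through this substitution. Writing $Y = \begin{pmatrix} X & x \\ x^{\mathrm T} & 1\end{pmatrix}$, we have $x = Ye_{m+1}$ (the first $m$ entries of the last column) and $\diag(X) = \diag(Y)$ restricted to the first $m$ coordinates; the constraint $\diag(X) = x$ together with $Y_{m+1,m+1} = 1$ is exactly $\diag(Y) = Ye_{m+1}$, i.e. $\diag(WUW^{\mathrm T}) = WUW^{\mathrm T}e_{m+1}$, and $e_{m+1}^{\mathrm T}WUW^{\mathrm T}e_{m+1} = 1$ — precisely the first two constraints of $SDP_{LS}$. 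The squared linear constraints $\langle a_ia_i^{\mathrm T}, X\rangle = b_i^2$ for $i \in V\setminus\{t\}$ are automatically satisfied by $Y = WUW^{\mathrm T}$ since $W^{\mathrm T}T = 0$ forces $(a_i^{\mathrm T},-b_i)W = 0$, hence $\langle (a_i,-b_i)(a_i^{\mathrm T},-b_i), WUW^{\mathrm T}\rangle = 0$, which unpacks to $\langle a_ia_i^{\mathrm T},X\rangle - 2b_i a_i^{\mathrm T}x + b_i^2 = 0$; combined with Lemma \ref{Slater:nullspaceofY}'s conclusion that $a_i^{\mathrm T}x = b_i$ for these $i$ (an easy consequence of $Y(a_i^{\mathrm T},-b_i)^{\mathrm T}=0$ read on the last row), this collapses to $\langle a_ia_i^{\mathrm T},X\rangle = b_i^2$. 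The remaining constraint $a_s^{\mathrm T}x = b_s$ is likewise the last-row reading of $Y(a_s^{\mathrm T},-b_s)^{\mathrm T} = 0$, so it too is implied and need not appear in $SDP_{LS}$. Finally the objective: $\langle Q,X\rangle = \langle \hat Q, Y\rangle = \langle \hat Q, WUW^{\mathrm T}\rangle = \langle W^{\mathrm T}\hat Q W, U\rangle$, so objective values match.

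The one point demanding a little care is the converse direction — checking that every $U$ feasible for $SDP_{LS}$ yields a $Y = WUW^{\mathrm T}$ that is feasible for $SDP_L$, in particular that the squared linear constraints and $a_s^{\mathrm T}x = b_s$ hold. Here the argument is that $W^{\mathrm T}T = 0$ gives $Y$ a nontrivial null space containing $\mathrm{Range}(T)$, so $Y(a_i^{\mathrm T},-b_i)^{\mathrm T} = 0$ for all $i \in V\setminus\{t\}$; reading the top block gives $Xa_i - b_i x = 0$ and reading the bottom entry gives $x^{\mathrm T}a_i - b_i = 0$, i.e. $a_i^{\mathrm T}x = b_i$ (in particular $a_s^{\mathrm T}x = b_s$), whence $\langle a_ia_i^{\mathrm T},X\rangle = a_i^{\mathrm T}(Xa_i) = a_i^{\mathrm T}(b_i x) = b_i^2$. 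I do not anticipate a genuine obstacle; the main thing is to keep straight which constraints of $SDP_L$ are enforced directly versus which are automatically inherited from the $W$-parametrization, and to invoke the Tunçel / Drusvyatskiy–Wolkowicz facial-reduction fact (cited above) only for the claim that $W$ exists with the stated dimension and that $TT^{\mathrm T}$ exposes the minimal face, which justifies that no feasibility is lost.
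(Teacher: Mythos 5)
Your proposal is correct and follows essentially the same route as the paper: parametrize feasible points as $Y = WUW^{\mathrm T}$, use Lemma \ref{Slater:nullspaceofY} (with the facial-reduction fact) to show no feasibility is lost in one direction, and verify the constraints of $SDP_L$ directly from $W^{\mathrm T}T = 0$ in the other, with matching objectives via $\langle \hat Q, WUW^{\mathrm T}\rangle = \langle W^{\mathrm T}\hat Q W, U\rangle$. Your write-up simply spells out in more detail the steps the paper's proof states tersely.
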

\begin{proof}
Let $U$ be a feasible solution for $(\ref{SDPQSPP_L2Slater})$.
We show that $Y = WUW^{\mathrm T}$ is feasible for $\eqref{SDP_L}$.
Let $X:= Y_{1:m,1:m}$, i.e., $X$ is the  leading principal submatrix  of order $m$ of $Y$,   and $x:=\diag(X)$.
To show that  $a_{s}^{\mathrm T}x = b_{s}$, we exploit
$WUW^{\mathrm T}
\begin{pmatrix}
a_{s} \\
-b_{s}
\end{pmatrix} = 0$, from where it follows the equality.

The last set of constraints in  $(\ref{SDP_L})$ are also satisfied as
$$\langle a_{i}a_{i}^{\mathrm T}, X\rangle - b_{i}^2 = \langle \begin{pmatrix}
a_{i} \\ -b_{i}
\end{pmatrix}\begin{pmatrix}
a_{i}^{\mathrm T} & -b_{i}
\end{pmatrix}, Y\rangle = \langle W^{\mathrm T}\begin{pmatrix}
a_{i} \\ -b_{i}
\end{pmatrix}\begin{pmatrix}
a_{i}^{\mathrm T} & -b_{i}
\end{pmatrix}W, U\rangle = 0, \quad i \neq t.$$

The converse direction follows from the fact that for every feasible $Y$ in $(\ref{SDP_L})$, there exists a matrix $U \succeq 0$ such that $Y = WUW^{\mathrm T}$.
It is also easy to see that the two objectives coincide.
\end{proof}

If we add constraints $e_{i}^{\mathrm T}WUW^{\mathrm T}e_{j} \geq 0$ for every $i ,j \in \{1,\ldots,m\}$
to $SDP_{LS}$, then we obtain  the following SDP relaxation that is equivalent to $SDP_{NL}$:
\begin{equation} \label{SDP_NLS}
(SDP_{NLS})\begin{cases}
\begin{aligned}
&\text{minimize}
& & \langle W^{\mathrm T}\hat{Q} W, U \rangle\\
& \text{subject to}
& &  \text{diag}(WUW^{\mathrm T}) = WUW^{\mathrm T}e_{m+1}, & \hspace{0.5cm} &\\
&
& &  e_{m+1}^{\mathrm T}WUW^{\mathrm T}e_{m+1} =1, & \hspace{0.5cm} &\\
&
& &  WUW^{\mathrm T} \geq 0, & \hspace{0.5cm} &\\
&
& &  U \succeq 0. & \hspace{0.5cm} &
\end{aligned}
\end{cases}
\end{equation}
In the next section, we give  explicit descriptions of the projection matrices corresponding to two different types of grid graphs.

\subsection{Explicit expressions for the projection matrices} \label{slater_directedgridgraphs}

A basis of the orthogonal complement to $T$ from \eqref{slater:T}, can be obtained numerically.
However, it is computationally more efficient  to use an explicit and sparse expression for the basis $W$.
In this section, we construct $W$ for the (flow) grid graphs.

If ${C = (v_{1},\ldots,v_{k})}$ is an ordered set of vertices such that $v_{1} = v_{k}$ and each pair of vertices $\{v_{i},v_{i+1}\}$  for $i = 1,\ldots ,k-1$  are adjacent,
then $C$ is called a cycle. It is a well-known result that the null space of the incidence matrix can be identified by the vectors corresponding to the cycles in the graph.

\begin{lemma}\label{cyclenulltheorem}$\cite{biggs1993algebraic}$
Every cycle in a digraph induces a vector in the null space of the incidence matrix.
\end{lemma}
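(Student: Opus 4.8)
The statement to prove is Lemma~\ref{cyclenulltheorem}: every cycle in a digraph induces a vector in the null space of the incidence matrix $\mathcal I$.

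The plan is to make precise what "the vector induced by a cycle" means, and then verify directly that $\mathcal I$ annihilates it. Given a cycle $C=(v_1,\ldots,v_k)$ with $v_1=v_k$, each consecutive pair $\{v_i,v_{i+1}\}$ is joined by an arc $e_i\in A$, but that arc may be oriented either as $(v_i,v_{i+1})$ (a "forward" arc of the traversal) or as $(v_{i+1},v_i)$ (a "backward" arc). Define $z\in\R^m$ by $z_{e_i}=+1$ if $e_i=(v_i,v_{i+1})$, $z_{e_i}=-1$ if $e_i=(v_{i+1},v_i)$, and $z_e=0$ for every arc $e$ not on the cycle. (If an arc is traversed more than once one sums the contributions, but for a simple cycle each $e_i$ is distinct.) The claim is that $\mathcal I z = 0$.

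The key step is to compute, for an arbitrary vertex $v\in V$, the entry $(\mathcal I z)_v=\sum_{e\in A}\mathcal I_{v,e}\,z_e=\sum_{i=1}^{k-1}\mathcal I_{v,e_i}\,z_{e_i}$, since only cycle arcs contribute. First I would observe that if $v$ does not appear among $v_1,\ldots,v_{k-1}$ then every term is zero. If $v$ appears, consider a step $i$ of the traversal in which $v$ is an endpoint of $e_i$, i.e.\ $v\in\{v_i,v_{i+1}\}$. A short case check on the two orientations shows that $\mathcal I_{v,e_i}z_{e_i}=+1$ exactly when the traversal \emph{leaves} $v$ at step $i$ (either $e_i=(v,v_{i+1})$ with $z_{e_i}=+1$, or $e_i=(v_{i+1},v)$ with $v=v_{i+1}$... wait, that is an entering step) — more cleanly: $\mathcal I_{v,e_i}z_{e_i}=+1$ if the directed traversal moves from $v$ to the next vertex at step $i$, and $=-1$ if it moves from the previous vertex to $v$. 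Hence $(\mathcal I z)_v$ equals the number of traversal steps departing $v$ minus the number arriving at $v$. Since the traversal is a closed walk, each visit to $v$ contributes one arrival and one departure (the vertex $v_1=v_k$ being a single visit when we identify the endpoints), so these counts are equal and $(\mathcal I z)_v=0$.

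I would then conclude that $\mathcal I z=0$, i.e.\ $z\in\mathrm{Null}(\mathcal I)$, which is the assertion. The only mildly delicate point — and thus the main obstacle — is bookkeeping the orientation signs correctly and handling the wrap-around vertex $v_1=v_k$: one must be careful that the step from $v_{k-1}$ to $v_k=v_1$ is counted as an arrival at $v_1$ and the step from $v_1$ to $v_2$ as a departure from $v_1$, so that the closed-walk balance holds at that vertex too. Since the result is classical (it is precisely \cite{biggs1993algebraic}), I would keep the argument to this brief verification rather than developing the full cycle-space theory.
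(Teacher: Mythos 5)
Your proposal is correct and follows essentially the same route as the paper: you construct the signed $\pm 1$ indicator vector of the cycle according to arc orientation and verify directly, vertex by vertex, that the incidence matrix annihilates it (the paper simply asserts $a_i^{\mathrm T}w=0$, while you spell out the arrival/departure balance at each vertex, including the wrap-around at $v_1=v_k$). No gaps; the sign bookkeeping in your case check is consistent with the paper's convention that $\mathcal{I}_{v,e}=1$ if $e$ leaves $v$ and $-1$ if it enters $v$.
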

\begin{proof}
Let ${C = (v_{1},\ldots,v_{k})}$ be a cycle in the graph $G$ with $m$ arcs.
Since $v_{i},v_{i+1}$ are adjacent, then either $(v_{i},v_{i+1}) \in A$ or $(v_{i+1},v_{i}) \in A$.
We choose one of the two possible  cycle-orientations, say from $v_i$ to $v_{i+1}$, $i = 1,\ldots ,k-1$.
Define the vector $w \in \R^{m}$ such that
$$w_{e} = \begin{cases}
1 & \text{ if } e \in C  \text{ has the same orientation as } C, \\
-1 & \text{ if } e \in C  \text{ has the reverse orientation in } C, \\
0 & \text{ if } e \text{ is not in the cycle.}
\end{cases}$$
Now, for the $i$th row of the incidence matrix $a_{i}$ it follows that $a_{i}^{\mathrm T}w = 0$ for every $i \in V$.
Thus $w$ is in the null space of the incidence matrix.
\end{proof}

\textbf{The grid graphs.} We are now ready to construct vectors in the orthogonal complement of $T$ for the grid graph $G_{pq}$, see Example \ref{def:GridGraphs}.
Define cycles $(v_{i,j},v_{i,j+1},v_{i+1,j+1},v_{i+1,j})$ for $i = 1,\ldots,p-1$ and $j = 1,\ldots, q-1$,
and take vectors $w_{ij}\in \R^{m} $ as in Lemma \ref{cyclenulltheorem}.
Additionally, let $w$ be the characteristic vector of the path $(v_{1,1},\ldots,v_{1,q},\ldots,v_{p,q})$.
It is not difficult to verify the following:
\[
(a_{k}^{\mathrm T}, -b_{k})\begin{pmatrix}
w_{ij}\\ 0
\end{pmatrix} = a_{k}^{\mathrm{T}}w_{ij} = 0 \text{ and } (a_{k}^{\mathrm T}, -b_{k})\begin{pmatrix}
w \\ 1
\end{pmatrix} = a_{k}^{\mathrm{T}}u - b_{k} = 0,
\]
for  $i = 1,\ldots,p-1$ and $j = 1,\ldots, q-1$ and $k \in V$.
Thus, the following ${m-n+2}$ independent vectors
$$\begin{pmatrix}
w\\ 1
\end{pmatrix} \cup
\left \{ \begin{pmatrix}
w_{ij}\\ 0
\end{pmatrix} \;|\; i=1,\ldots,p-1, j = 1,\ldots, q-1 \right \}$$
span the null space of the column space of $T$.
Thus, we have
$${W =\begin{bmatrix}
	w & w_{1,1} & \ldots & w_{p-1,q-1} \\
	1 & 0 & \ldots  & 0
	\end{bmatrix}\in \R^{m+1,m-n+2}}.$$

\textbf{The flow grid graphs.}
 Here,  we construct vectors in the orthogonal complement of $T$ for the flow grid graph with $pq+2$ vertices, see Example  \ref{def:GridGraphs2}.
We first define cycles $(v_{i,j},v_{i,j+1},v_{i+1,j+1},v_{i+1,j})$ for $i = 1,\ldots,p-1$ and $j = 1,\ldots, q-1$, and cycles $t_{s,i} = (s,v_{i,1},v_{i+1,1})$,
$t_{i,t} = (t,v_{i,q},v_{i+1,q})$ for $i = 1,\ldots,p-1$. Then, we take vectors $w_{ij}\in \R^{m}$ as in Lemma \ref{cyclenulltheorem} for the defined cycles.
Let $w \in \R^{m}$ be the characteristic vector of the path $(s,v_{1,1},\ldots,v_{1,q},t)$. Similar to the construction of $W$ for the grid graphs,
we obtain an explicit expression for $W \in \R^{m+1,m-n+2}$ from vectors $w_{ij}$ and $w$.

\section{SDP relaxations and directed acyclic graphs} \label{sect:Special graphs}

Most of the constraints in the SDP relaxations $SDP_{L}$ and $SDP_{NL}$ are derived from the incidence matrix of the underlying graph.
Therefore,   constraints in the relaxations  differ for different graphs.
In this section we show some additional properties of  the feasible sets of $SDP_{L}$ and $SDP_{NL}$
when the considered graph is acyclic.

We show first results for graphs in which  every $s$-$t$ path has the same length.

\begin{lemma} \label{gridgraph_lemma1}
Let $G_{p,q}$ be the grid graph, and
$Y=\begin{pmatrix}
X & x \\
x^{\mathrm T} & 1
\end{pmatrix}$  feasible for $SDP_{L}$. Then,
\begin{enumerate} 
	\item[(i)] $Xe = L x$;
	\item[(ii)] $x^{\mathrm T}Xe = L^{2}$ and $e^{\mathrm T}Ye = (L+1)^{2}$,
\end{enumerate}
where $L = p + q -2$ is the length of the $s$-$t$ path.
\end{lemma}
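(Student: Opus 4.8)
The plan is to reduce both parts to a single structural fact about the grid graph, after which everything follows from the null‑space property of feasible matrices established in Lemma~\ref{Slater:nullspaceofY}.

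The key step is to express the all‑ones vector $e\in\R^{m}$ as an explicit combination of the incidence‑matrix rows $a_{i}$ with $i\neq t$. I would assign to each vertex the potential $y_{v_{i,j}}:=(p+q)-(i+j)$. Every arc of $G_{p,q}$ runs from some $v_{i,j}$ to some $v_{i',j'}$ with $|i-i'|+|j-j'|=1$, $i'\ge i$, $j'\ge j$, which forces $i'+j'=i+j+1$; hence the tail‑minus‑head potential difference along each arc is exactly $1$. This yields $\sum_{i\neq t}y_{i}a_{i}=e$ and $\sum_{i\neq t}y_{i}b_{i}=y_{s}=(p+q)-2=L$ (the omitted $t$‑term is $y_{t}b_{t}=0$ anyway). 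Thus $e$ lies in the row span of $\{a_{i}:i\neq t\}$ with these coefficients, so $e^{\mathrm T}x=L$ holds whenever $a_{i}^{\mathrm T}x=b_{i}$ for all $i\neq t$; this is precisely the statement that every $s$‑$t$ path has length $L$. Pinning down $e$ in this row span with the correct coefficients is the only non‑routine ingredient, and it is exactly where the hypothesis that $G_{p,q}$ is layered with all $s$‑$t$ paths of equal length enters. Alternatively one could invoke Lemma~\ref{SLC_redundant} with $\bar a=e$, $\bar b=L$, but the explicit $y$ is still needed to apply it.

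By Lemma~\ref{Slater:nullspaceofY}, a feasible $Y$ of $SDP_{L}$ satisfies $Y\,(a_{i}^{\mathrm T},-b_{i})^{\mathrm T}=0$ for every $i\neq t$; reading off the two block‑rows of this identity gives $Xa_{i}=b_{i}x$ and $a_{i}^{\mathrm T}x=b_{i}$ for all $i\neq t$. Part (i) is then immediate: $Xe=X\bigl(\sum_{i\neq t}y_{i}a_{i}\bigr)=\sum_{i\neq t}y_{i}\,(Xa_{i})=\sum_{i\neq t}y_{i}b_{i}\,x=Lx$. In the same way $e^{\mathrm T}x=\sum_{i\neq t}y_{i}\,(a_{i}^{\mathrm T}x)=\sum_{i\neq t}y_{i}b_{i}=L$.

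For part (ii) I would just combine these: $\langle ee^{\mathrm T},X\rangle=e^{\mathrm T}(Xe)=e^{\mathrm T}(Lx)=L\,(e^{\mathrm T}x)=L^{2}$, and then, expanding $Y$ in block form with last diagonal entry $1$, $e^{\mathrm T}Ye=e^{\mathrm T}Xe+2\,e^{\mathrm T}x+1=L^{2}+2L+1=(L+1)^{2}$. Nothing after the structural observation of the second paragraph requires more than substitution, so that observation — writing $e=\sum_{i\neq t}y_{i}a_{i}$ cleanly — is the only real obstacle.
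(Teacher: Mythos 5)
Your proposal is correct and takes essentially the same route as the paper: the paper's proof uses the identical vertex potential $p+q-i-j$ (as a vector $w$ with $Tw=(e^{\mathrm T},-L)^{\mathrm T}$), invokes the null-space property of Lemma~\ref{Slater:nullspaceofY} to get $Y(e^{\mathrm T},-L)^{\mathrm T}=0$, hence $Xe=Lx$ and $e^{\mathrm T}x=L$, and then obtains (ii) by substitution just as you do. Note that, exactly like the paper's own argument, what your computation yields for the first claim in (ii) is $e^{\mathrm T}Xe=L^{2}$ (from $Xe=Lx$ one only gets $x^{\mathrm T}Xe=Lx^{\mathrm T}x$), which is evidently the intended reading of the statement.
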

\begin{proof}
Let $T\in \R^{m+1,n-1}$ be the matrix defined in \eqref{slater:T}. Note that the columns of $T$ can be indexed by the vertices $v_{ij}$ of $G_{p,q}$.
Define the vector $w \in \R^{n-1}$ such that the element of $w$ that corresponds to the vertex $v_{ij}$ equals $p+q-i-j$.
Then we have $Tw = (e^{\mathrm T}, -L)^{\mathrm T}$.

Since the column space of the matrix $T\in \R^{m+1,n-1}$ spans the null space of $Y$, the vector $(e^{\mathrm T}, -L)^{\mathrm T}$ is also in the null space of
$Y$, i.e., $Y(e^{\mathrm T}, -L)^{\mathrm T} = 0$. From here it follows that $Xe = L x$.
Using the fact that $e^{\mathrm T}x = L$, we can derive (ii) from (i).
\end{proof}

Clearly Lemma \ref{gridgraph_lemma1} also holds for feasible solutions of $SDP_{NL}$.
We should note that the similar proof follows for any graph in which every $s$-$t$ path has the constant length.

In the following lemma we show that a particular zero pattern holds for feasible points of $SDP_{NL}$ when the considered graph is acyclic.

\begin{lemma}\label{SDP_NLproperty2}
Let $(X,x)$ be feasible for $SDP_{NL}$.
If $G$ is a  directed acyclic graph,
then $X_{ef} = 0$ whenever there exists no $s$-$t$ path containing both arcs $e$ and $f$.
\end{lemma}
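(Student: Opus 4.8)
The plan is to show that, for each fixed arc $f$, the $f$-th column $z:=Xe_f$ of $X$ is a \emph{nonnegative $s$-$t$ flow whose value equals $x_f$}, and that the amount of this flow sent through the arc $f$ itself is again exactly $x_f$. This will force every $s$-$t$ path in a flow decomposition of $z$ to traverse $f$, and hence every arc $e$ with $X_{ef}\neq 0$ will lie on an $s$-$t$ path through $f$.

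First I would collect the facts about the columns of $X$. Since a feasible point of $SDP_{NL}$ is also feasible for $SDP_{L}$, Lemma~\ref{Slater:nullspaceofY} gives $\begin{pmatrix} a_i \\ -b_i\end{pmatrix}\in\mathrm{Null}(Y)$ for every $i\in V\backslash\{t\}$; reading off the top block of $Y\begin{pmatrix} a_i \\ -b_i\end{pmatrix}=0$ yields $Xa_i=0$ for every transshipment vertex $i\in V\backslash\{s,t\}$ and $Xa_s=x$. Summing the rows of the incidence matrix to zero gives $a_t=-\sum_{i\neq t}a_i$, hence $Xa_t=-x$. Using that $(a_i)_e\in\{+1,-1,0\}$ according to whether arc $e$ leaves, enters, or misses vertex $i$, together with the symmetry $X_{ef}=X_{fe}$, these identities say precisely that $z=Xe_f$ satisfies flow conservation at every vertex other than $s$ and $t$, has net outflow $x_f$ at $s$, and net inflow $x_f$ at $t$. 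Moreover $z\geq 0$ because $X\geq 0$ in $SDP_{NL}$, and $z_f=X_{ff}=x_f$ by $\diag(X)=x$.

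Next I would apply the flow decomposition theorem to $z$: a nonnegative $s$-$t$ flow is a nonnegative combination of characteristic vectors of directed $s$-$t$ paths and directed cycles. Since $G$ is acyclic, no cycles occur, so $z=\sum_{k=1}^{K}\lambda_k\chi_{P_k}$ with $\lambda_k>0$ and $P_k$ an $s$-$t$ path; then $\sum_k\lambda_k=\mathrm{val}(z)=x_f$, while $\sum_{k:\,f\in P_k}\lambda_k=z_f=x_f$, and since all $\lambda_k$ are strictly positive this forces $f\in P_k$ for every $k$. Finally, for an arc $e$ with $X_{ef}>0$ we have $z_e>0$, so $e\in P_k$ for some $k$, and that $P_k$ is an $s$-$t$ path containing both $e$ and $f$. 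Contrapositively, if no $s$-$t$ path contains both $e$ and $f$, then $X_{ef}=0$, which is the claim. (The degenerate case $x_f=0$ is covered too: then $z$ is a value-$0$ nonnegative acyclic flow, hence $z=0$, so the whole $f$-th column vanishes.)

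The crux I expect is the observation $z_f=x_f=\mathrm{val}(z)$: this is exactly what upgrades ``$e$ lies on some $s$-$t$ path'' to ``$e$ lies on an $s$-$t$ path that also uses $f$.'' Everything else is bookkeeping with the incidence-matrix identities furnished by Lemma~\ref{Slater:nullspaceofY} plus a standard flow decomposition, with acyclicity entering only to eliminate circulations in that decomposition.
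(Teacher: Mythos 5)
Your proof is correct, but it follows a genuinely different route from the paper's. The paper argues combinatorially: it fixes a topological ordering, and from the hypothesis that no $s$-$t$ path uses both $e$ and $f$ it constructs a vertex set $S$ with no arcs from $V\setminus S$ into $S$, so that the arc set $A'$ leaving $S$ is a cut containing both $e$ and $f$ that every $s$-$t$ path crosses exactly once; the induced valid equality $a^{\mathrm T}x=1$ and its square $\langle aa^{\mathrm T},X\rangle=1$ (redundant by Lemma~\ref{SLC_redundant}) give $\tr(X_1)=\langle J,X_1\rangle=1$ for the principal submatrix $X_1$ indexed by $A'$, and $X\geq 0$ then forces $X_1$ to be diagonal, hence $X_{ef}=0$. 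You instead read the $f$-th column $z=Xe_f$ as a nonnegative $s$-$t$ flow of value $x_f$ (via the incidence-matrix identities $Xa_i=b_i x$ extracted from Lemma~\ref{Slater:nullspaceofY}, which do apply since $SDP_{NL}$-feasible points are $SDP_L$-feasible) with the key extra observation $z_f=x_f=\mathrm{val}(z)$, and then invoke flow decomposition, acyclicity only serving to exclude circulations; the degenerate case $x_f=0$ is handled correctly. Your argument is more global and arguably cleaner: one flow-decomposition step settles all pairs $e,f$ simultaneously, avoids the case analysis in the cut construction, and does not need Lemma~\ref{SLC_redundant}. The paper's argument, on the other hand, makes explicit the family of cut equalities $a^{\mathrm T}x=1$ underlying the zero pattern, which is informative if one wants to add such constraints (as in $SDP_{L+}$) rather than merely deduce the pattern; both proofs use $X\geq 0$ essentially, consistent with the remark that the conclusion fails for $SDP_L$.
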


\begin{proof}
Let $(v_{1},\ldots,v_{n})$ be a topological ordering of the directed acyclic graph $G$, and $s=v_{1}$ and $t = v_{n}$.
Assume without loss of generality that $e=(v_{i},v_{j}), f=(v_{k},v_{l})$ and $i < k$.

We define a subset $S$ of vertices based on the order of $v_{j}$ and $v_{k}$.
If $j > k$, then   $S := \{v_{1},\ldots,v_{k}\}$.
If $j < k$, then we define $$S  := \{v_{1},\ldots,v_{j-1}\} \cup \{ v\in  \{v_{j+1},\ldots,v_{k-1}\} \;|\; \text{there exists a path from $v$ to $v_{k}$} \}  \cup \{v_{k}\}.$$
We claim that there does not  exist an arc from $V\backslash S$ to $S$.
The claim is trivial when $j > k$.
Therefore we discuss the case when $j < k$.
Suppose for the sake of contradiction that there exists  an arc $(v_{i'},v_{j'})$ with $v_{i'} \in V\backslash S$ and $v_{j'} \in S$.
By the construction of $S$, we know $i'$ and $j'$ satisfy $j \leq i' < j' \leq k$. As $v_{j'} \in S$ and $j < j' \leq k$,
we have that there is a path from $v_{j'}$ to $v_{k}$. Since $(v_{i'},v_{j'})$ is an arc of $G$, this means that there is also a path from $v_{i'}$ to $v_{k}$, and thus $v_{i'} \in S$.
This contradicts  the assumption $v_{i'} \in V\backslash S$ for  $i' > j$.
If $i' = j$, then this contradicts the assumption that there does not exist  $s$-$t$ path containing both arcs $e$ and $f$.

Let $A'$ be the set that  contains arcs from $S$ to $V\backslash S$. Thus $e,f \in A'$. Define $\lambda \in \R^{n-1}$ such that $\lambda_{i} = 1$ if $i \in S$, and zero otherwise.
Because there are no arcs from $V\backslash S$ to $S$, we know that $a:= \sum_{i}\lambda_{i}a_{i}$ is a vector such that $a_{e} = 1$ if $e \in A'$, and zero otherwise.
Clearly, $\lambda^{\mathrm T}b = 1$. Thus $a^{\mathrm T}x  =1$ is a valid constraint, which has the interpretation that every $s$-$t$ path contains exactly one arc in $A'$.
Applying Lemma \ref{SLC_redundant}, we know that the squared linear constraint $\langle aa^{\mathrm T} ,X \rangle = 1$ is a redundant constraint.

Let $X_{1}$ be the submatrix of $X$  associated to the arcs in $A'$.
From $a^{\mathrm T}x  =1$ and $\langle aa^{\mathrm T} ,X \rangle = 1$,
we have  $\tr(X_{1}) = 1$ and $\langle J,X_{1} \rangle = 1.$ As $X_{1} \geq 0$, it holds that $X_{1}$ is a diagonal matrix.
Thus $X_{e',f'} = (X_{1})_{e',f'} = 0$ for every distinct arcs $e',f' \in A'$. In particular, we have $X_{e,f} = 0$ as $e,f \in A'$.
\end{proof}

It is not difficult to verify that Lemma \ref{SDP_NLproperty2} does not hold for feasible points in $SDP_{L}$.
Therefore, in order to tighten the $SDP_{L}$  relaxation one may enforce  constraints $X_{ef} = 0$ for $e,f\in A$,
whenever there exists no $s$-$t$ path containing both arcs $e$ and $f$.
We denote so obtained relaxation by $SDP_{L+}$ and its Slater feasible version $SDP_{LS+}$.
Note that for a directed acyclic graph it is not difficult to determine all such pairs of arcs, but this is not the case in general.
Table \ref{table:SDP_LvsSDP_L+} shows that $SDP_{LS+}$ provides significantly better bound than $SDP_{LS}$.
Therefore, in Section \ref{sect:NumerExper} we compute $SDP_{LS+}$ for the QSPP instances on the grid graphs.

\begin{table}[H]
	\centering
		\begin{tabular}{|cc|cc|}
			\hline
			$n$ & $m$ & $sdp_{ls}$ & $sdp_{ls+}$  \\ \hline
			400  & 760  & -1057.81 & 393.38  \\
			400  & 760  & -1052.84 & 428.69  \\
			400  & 760  & 1146.86 & 3109.75  \\
			400  & 760  & 2846.78 & 4773.37  \\  \hline
	\end{tabular}
\caption{SDP bounds for the QSPP instances on $G_{20,20}$.} \label{table:SDP_LvsSDP_L+}
\end{table}

\section{The alternating direction method of multipliers} \label{section_admm}

Although semidefinite programming has proven effective for combinatorial optimization problems,
SDP solvers based on interior-point methods might have  considerable  memory demands already for medium-scale problems.
The alternating direction method of multipliers is a first-order method for convex problems developed in the 1970s.
This method decomposes an optimization problem into subproblems that may be easier to solve.
This feature makes the ADMM well suited for large-scaled problems. For state of the art in theory and applications of the ADMM, we refer the interested readers to \cite{boyd2011distributed}.
The study of the ADMM for solving semidefinite programming problems can be found in \cite{wen2010alternating, povh2006boundary,oliveira2015admm}.

Oliveira, Wolkowicz and Xu \cite{oliveira2015admm} propose solving an SDP relaxation for the quadratic assignment problem  using the ADMM.
Their computational experiments show that the proposed variant of the ADMM exhibits remarkable robustness, efficiency, and even provides improved bounds.
In this section, we briefly outline the approach from \cite{oliveira2015admm} and show how to apply it for solving our SDP relaxations of the QSPP.

We consider now the SDP relaxation $SDP_{NLS}$.
In order to  obtain a separable objective,  we replace $WUW^{\mathrm T}$ by $Y$, and  add the coupling constraint $Y = WUW^{\mathrm T}$.
Furthermore, we  add  the  redundant constraint $Y \leq 1$, which is known to improve the performance of the algorithm, see \cite{oliveira2015admm}.
This yields the following program:
\begin{equation}  \label{ADMM_SDP1}
\begin{aligned}
&\text{minimize}
& & \langle \hat{Q}, Y \rangle\\
& \text{subject to}
& &  \text{diag}(Y) = Ye_{m+1}, & \hspace{0.5cm} &\\
&
& &  Y_{m+1,m+1} =1, & \hspace{0.5cm} &\\
&
& &  Y = WUW^{\mathrm T}   , & \hspace{0.5cm} &\\
&
& &  0 \leq Y \leq 1, \; U \succeq 0. & \hspace{0.5cm} &
\end{aligned}
\end{equation}

The augmented Lagrangian of \eqref{ADMM_SDP1} corresponding to the linear
constraint $Y = WUW^{\mathrm T}$ is given by:
\begin{align*}
\mathcal{L}(U,Y,Z)  = \langle \hat{Q}, Y \rangle + \langle Z, Y - WUW^{\mathrm T} \rangle + \frac{\beta}{2} \vectornorm{Y - WUW^{\mathrm T}}^{2}_{F},
\end{align*}
where $Z \in \mathcal{S}^{m+1}$ is the dual variable, and $\beta > 0$  the penalty parameter, and $\| \cdot \|_F$ the Frobenius norm.
The alternating direction method of multipliers solves in the $(k+1)$-th iteration the following subproblems:
\begin{align}
U^{k+1} & = \arg \min_{U  \succeq 0 } \mathcal{L}(U,Y^{k},Z^{k}), \label{ADMM:U_sub}\\
Y^{k+1} & = \arg \min_{Y  \in P} \mathcal{L}(U^{k+1},Y,Z^{k}), \label{ADMM:Y_sub}\\
Z^{k+1} & = Z^{k} + \gamma \cdot \beta (Y^{k+1} - WU^{k+1}W^{\mathrm T}),\label{ADMM:Z_sub}
\end{align}
where \begin{align} \label{ADMM:P}
P = \{Y \in \mathcal{S}^{n} \; | \; \diag(Y) = Ye_{m+1},\;  Y_{m+1,m+1} =1,\; 0 \leq Y \leq 1\}.
\end{align}
Here $\gamma \in (0,\frac{1+\sqrt{5}}{2})$ is the step-size for  updating the dual variable $Z$, see e.g., \cite{wen2010alternating}.

Let $W$ be normalized such that $WW^{\mathrm T} = I$.
Then, the $U$-subproblem  reduces to the following:
\[
\begin{array}{rcl}
U^{k+1} & =& \arg \min\limits_{U  \succeq 0 }  \langle Z^{k}, Y^{k} - WUW^{\mathrm T} \rangle + \frac{\beta}{2} \vectornorm{Y^{k} - WUW^{\mathrm T}}^{2}_{F} \label{ADMM:Usub} \\[1.5ex]
& =& \mathcal{P}_{\mathcal{S}_{+}} (W^{\mathrm T}(Y^{k}  + \frac{1}{\beta}Z^{k})W), \nonumber
\end{array}
\]
where $\mathcal{P}_{\mathcal{S}_{+}}(M)$ is the projection to the cone of positive semidefinite matrices.

The closed-form solution of the $Y$-subproblem is as follows:
\begin{align} \label{ADMM:Y_sub_sol}
Y^{k+1} & =  \arg \min_{Y  \in P} \vectornorm{Y - WU^{k+1}W^{\mathrm T} + \frac{\hat{Q} +Z^{k}}{\beta}}^{2}_{F}   \\
& = \begin{cases}
\min\{1, \max\{0,\hat{Y}_{i,j}\}\}   & \text{ if } i<j < m + 1, \\
\min\{1, \max\{0,\frac{1}{3}\hat{Y}_{i,i} +\frac{2}{3}\hat{Y}_{i,m+1} \} \}   & \text{ if } i = j < m + 1, \\
\min\{1, \max\{0,\frac{1}{3}\hat{Y}_{i,i} +\frac{2}{3}\hat{Y}_{i,m+1} \} \}   & \text{ if } i < j = m+1, \\
1  & \text{ if } i = j = m + 1, \\
\end{cases} \nonumber
\end{align}
where
\begin{equation} \label{Yhat}
\hat{Y} = WU^{k+1}W^{\mathrm T} - \frac{\hat{Q}+Z^{k}}{\beta}.
\end{equation}

In a similar fashion, we can solve $SDP_{L}$ by the ADMM. Note that the non-negativity constraints are very strong cuts for the SDP relaxations.
These constraints are also extremely expensive when solving SDP relaxations with interior-point methods.
However, the complexity of the ADMM only slightly increases when the non-negativity constraints are imposed to strengthen the relaxation, as noticed in \cite{oliveira2015admm}.

\medskip

\textbf{Lower and upper bounds.} \label{ADMM_lbub}
To solve an SDP problem to a high accuracy by an ADMM-based solver can be prohibitively expensive.
Therefore Oliveira et al.~\cite{oliveira2015admm} consider solving \eqref{ADMM_SDP1} to a moderate accuracy, while obtaining a valid bound.
We implement their approach for the QSPP. This is explained in the sequel.

Let  $P$ be the feasible set for $Y$-subproblem, see \eqref{ADMM:P}, and ${\mathcal{Z} = \{Z \;|\;  W^{\mathrm T}ZW \preceq 0 \}}.$
The Lagrangian dual of \eqref{ADMM_SDP1} is as follows:
\begin{align*}
 \max_{Z} \min_{U\succeq 0,Y \in P} \langle \hat{Q}, Y \rangle + \langle Z, Y - WUW^{\mathrm T} \rangle  =  \max_{Z \in \mathcal{Z}} \min_{Y\in P}  \langle \hat{Q} + Z, Y \rangle,
\end{align*}
and satisfies weak duality.
Thus, for a feasible dual variable $Z \in \mathcal{Z}$
\begin{equation}\label{admm_upperbound}
g(Z) = \min_{Y\in P}  \langle \hat{Q} + Z, Y \rangle
\end{equation}
provides a lower bound for \eqref{ADMM_SDP1}.
Now, let $(\bar{U},\bar{Y},\bar{Z})$ be the output of the ADMM for  \eqref{ADMM_SDP1}.
The projection of $\bar{Z}$ onto $\mathcal{Z}$ gives us a feasible $Z$ that we  use to compute a lower bound.
The projection can be done efficiently, as explained in \cite{oliveira2015admm}.

One can also compute an upper bound for the problem from the output $(\bar{U},\bar{Y},\bar{Z})$ of the ADMM for \eqref{ADMM_SDP1}.
We define ${d} \in \R^{m}$ such that $d_{ii} := \bar{Y}_{ii}$ for $i=1,\ldots,m$, and solve the following linear programming problem:
\begin{align} \label{admm_lpupper}
\underset{x \in \R^{m}}{\text{minimize}} \;\; d^{\mathrm T}x \;\; \text{ s.t. } x \in P_{st}(G).
\end{align}
This gives a feasible $s$-$t$ path $x$ whose quadratic cost is an upper bound for the QSPP.
We note that the quality of the upper bound from \eqref{admm_lpupper} heavily depends on the quality of the ADMM output $\bar{Y}$.

\section{Improving  performance of the ADMM} \label{sect:FastADMM}

Oliveira et al., \cite{oliveira2015admm} (see also  Section \ref{section_admm})  show  how  to obtain a lower bound for the optimization  problem
from the output of the ADMM-based algorithm that solves an SDP relaxation  to a moderate accuracy.
So obtained bounds are weaker than the bounds obtained using higher accuracy.
Clearly,  there is a trade-off between the computational effort and the quality of the SDP  bound.
Our numerical results show that within a branch-and-bound framework it is preferable  to use slightly weaker bounds  that can be efficiently computed.

Therefore, in this section we study  how to improve the performance of the ADMM algorithm in the first few hundreds of iterations.
We restrict here   on graphs for which every $s$-$t$ path has the same length.

Let us first  recall the projection onto the simplex problem. The projection of a vector onto the simplex is a well-studied problem.
The simplex is defined as a set of non-negative vectors whose entries sum up to a non-negative number $a$:
$\Delta(a) := \{ x \in R^{n} \;|\;  x \geq 0, \; \sum_{i=1}^{n} x_{i} = a \}.$
Then, the minimization problem
$$\mathcal{P}_{a}(y) =  \arg \min_{x \in \Delta(a)} \vectornorm{x - y}$$
is a projection onto the simplex $\Delta(a)$.
We refer the reader to \cite{condat2014fast} for a comprehensive overview of this problem.
It is also known that the projection onto the simplex can be solved in $\mathcal{O}(n \log n)$, see \cite{held1974validation}.

\medskip
Suppose now that the length of every $s$-$t$ path is equal to $L$.
Then, the constraint $e^{\mathrm T}Ye = (L+1)^2$ is a valid constraint for $SDP_{NLS}$, see \eqref{ADMM_SDP1} and Lemma  \ref{gridgraph_lemma1}.
Let us show that this constraint can be incorporated in a way that our ADMM algorithm retains fast iterates.

Define matrix $S \in \mathcal{S}^{m+1}$ such that $S_{ii}=S_{i,m+1}=S_{m+1,i} = 1$ for $i = 1,\ldots,m+1$, and zero otherwise.
Then the constraints $ \langle S, Y \rangle  = 3\cdot L+1$ and ${\langle e^{\mathrm T}e - S , Y \rangle = L\cdot(L-1)}$ are valid for $SDP_{NLS}$.
Clearly,  the $U$-update \eqref{ADMM:U_sub} and $Z$-update \eqref{ADMM:Z_sub} in the ADMM for solving $SDP_{NLS}$ are not affected by adding those constraints.
The only change is in the feasible region $P$ (see \eqref{ADMM:P}) of the $Y$-subproblem \eqref{ADMM:Y_sub}.
Let us define the new feasible region $\bar{P} := P_{1} \cap P_{2}$ where
\begin{align*}
&P_{1} =  \{Y \in \mathcal{S}^{n} \; | \; \langle S, Y \rangle  = 3 L+1, \;  \; Y \geq 0, \;  \;  \diag(Y) = Ye_{m+1},\;  Y_{m+1,m+1} =1\},  \\
&P_{2} =  \{Y\in \mathcal{S}^{n} \; | \; \langle e^{\mathrm T}e - S, Y \rangle = L (L-1),\;  Y \geq 0 \} .
\end{align*}
In the sequel we show that the new $Y$-subproblem can be solved efficiently.
This is accomplished by splitting the problem into two subproblems based on the nonzero entries in $S$ and $e^{\mathrm T}e - S$ as follows:
\begin{align*}
\min_{Y  \in \bar{P}} \mathcal{L}(U^{k+1},Y,Z^{k}) & =   \min_{Y  \in \bar{P}} \vectornorm{Y - \hat{Y}}^{2}_{F}
=   \min_{Y  \in P_{1}} \vectornorm{Y - \hat{Y}}^{2}_{F} + \min_{Y  \in P_{2}} \vectornorm{Y - \hat{Y}}^{2}_{F},
\end{align*}
where $\hat{Y}$ is given in \eqref{Yhat}.
Each of the two  minimization problems on the right-hand side above is the projection onto the simplex problem.

For the first problem, we have that  ${\min_{Y  \in P_{1}} \vectornorm{Y - \hat{Y}}^{2}_{F} = \min_{Y  \in P_{1}} \sum_{i=1}^{m} (Y_{ii}-\hat{y}_{i})^2}$, where  $\hat{y} \in \R^{m}$ is a vector such that
$\hat{y}_{i} = \frac{1}{3}\hat{Y}_{ii}+\frac{2}{3}\hat{Y}_{i,m+1}$ for $i = 1,\ldots,m$.
Then, the minimizer of the first problem can be found via the following projection onto the simplex
${\mathcal{P}_{L}(\hat{y}) = \arg \min_{x \in \Delta(L)} \vectornorm{x - \hat{y} }.}$
More precisely, the explicit solution of the first problem is given by
\[
Y_{m+1,m+1} = 1 \text{ and } Y_{ii} = Y_{i,m+1} = Y_{m+1,i} = (\mathcal{P}_{L}(\hat{y}))_{ii} \quad\text{ for }i = 1,\ldots,m.
\]

For the second problem, we take the vector $\hat{y}\in \R^{{m \choose 2}}$ whose entries are indexed by the nonnegative entries
$(i,j)$, $i < j$, in $e^{\mathrm T}e - S$  such that $\hat{y}_{ij} = \hat{Y}_{ij}$.
Then, the second problem is equivalent to the projection onto the simplex  $\mathcal{P}_{L^{2}-L}(\hat{y})$, and
the solution is given by $$Y_{ij} = Y_{ji} = (\mathcal{P}_{L^{2}-L}(\hat{y}))_{ij} \quad\text{ for } i<j < m +1.$$

To sum up, we add redundant constraints to $SDP_{NLS}$ and obtain a different $Y$-subproblem from \eqref{ADMM:Y_sub}.
 The new  $Y$-subproblem  can be decomposed into two projections onto the simplex, which can be  solved efficiently.



\begin{figure}[h]
	\centering
	\subfloat[bounds for an instance on $G_{20,20}$]{{\includegraphics[width=7cm]{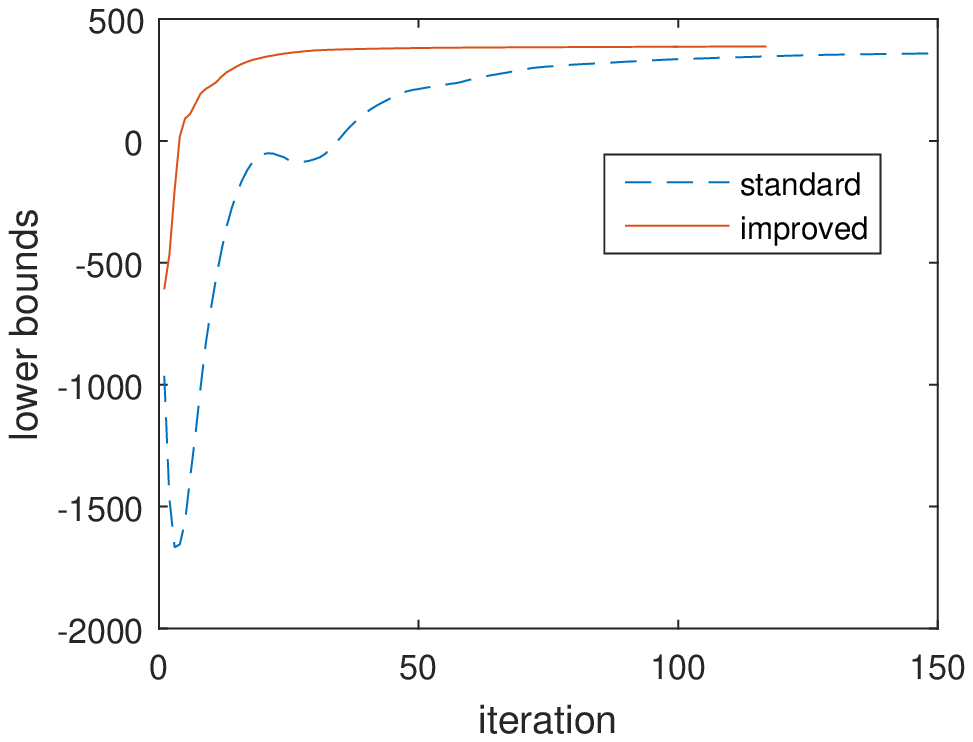} }\label{constantlength}}
	\quad
	\subfloat[bounds for an instance on  $G_8$]{{\includegraphics[width=7cm]{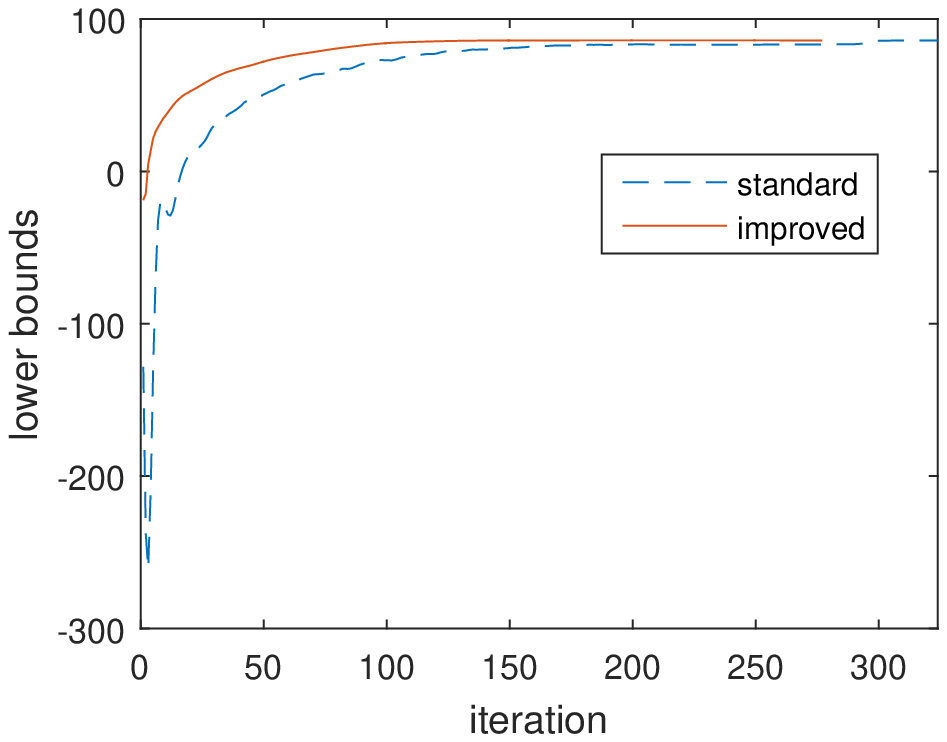}}\label{constantlength2}}%
	\caption{Lower bounds for the QSPP }%
	\label{fig:example}%
\end{figure}

We test an impact of adding $e^{\mathrm T}Ye = (L+1)^2$ to $SDP_{NLS}$ on  performance of the ADMM algorithm.
Figure \ref{constantlength} (resp.~\ref{constantlength2}) presents lower bounds computed in the first few hundred iterations of the ADMM algorithm
for  a QSPP instance on $G_{20,20}$ (resp.~$G_8$).
The the dashed lines present bounds obtained without using the projections onto the simplex, while the solid line presents bounds obtained by using the projections.
We observe that the bounds obtained by using the redundant constraints  are better.
The  lines end up at the points in which the stopping criteria is satisfied, see the next section for details.
Clearly, one should  incorporate additional redundant constraints in order to obtain a better performance of the algorithm in the earlier iterates.
Since the dashed line stabilizes after the initial fluctuations, the effect of the redundant constraints in not beneficial in the long run.

\subsection{A branch-and-bound algorithm} \label{sect:BnB}

We describe here  our branch-and-bound algorithm  for solving the QSPP on the grid graphs.
The B$\&$B algorithm combines our strongest SDP relaxation $SDP_{NLS}$, the ADMM-based solver, simulated annealing heuristics,
and \eqref{admm_lpupper} to solve instances of the grid graph.

Our branching rule is as follows: staring with the vertex $i$, we branch over each of its unvisited neighbors $j$, i.e., $e = (i,j) \in A$.
If we branch over an arc $e$, then the linear cost of each arc $f$ is increased by $2 q_{e,f}$.
The linear cost of each of the outgoing arcs from  vertex $j$ is increased by $q_{e,e}$.
This leads to two smaller quadratic shortest path problems, and each subproblem partitions the original QSPP.

The bounding scheme uses the semidefinite programming relaxation $SDP_{NLS}$ with redundant constraints in the way as described in this section.
At each node of the branching tree, we compute a lower bound for the current node using the ADMM algorithm,
and also update the best upper bound found so far.
At the root node, we compute an upper bound by using our simulated annealing algorithm.
In all other nodes we solve  the linear programming problem \eqref{admm_lpupper} in order to get an upper bound.

The settings of  the ADMM turn out to be crucial for the performance of the branch-and-bound algorithm. The ADMM is notorious for its slow convergence to high accuracy.
Therefore we compromise this by using the SDP relaxation with additional redundant constraints, and low-precision in the way as described in this section.
Here, we set the stopping criteria as follows: if the primal and dual residual is less than $0.5$, and the difference
between the objective values of two consecutive iterations is less than $0.1$ for at least $15$ iterations in a row, then we terminate the algorithm.
This termination rule still yields  lower bounds comparable to those obtained with high precision tolerance.
However, the computational cost is lower.

An implementation details of the branch-and-bound algorithm that incorporates SDP bounds and the ADMM for solving
the quadratic assignment problem  can be found in the master thesis of Liao \cite{liao2016QAPbbadmm}.

\section{Numerical experiments} \label{sect:NumerExper}

In this section we present numerical results for the quadratic shortest path problem.
We compute $SDP_{LS+}$ and  $SDP_{NLS}$ bounds by using the ADMM.
For comparison reasons we also compute lower bounds from \cite{rostami2016quadratic}.
We  present numerical results for solving  to optimality the QSPP on the grid graphs
by using our B$\&$B algorithm as described in see Section \ref{sect:FastADMM}.

The experiments are implemented in Matlab on the machine with an Intel(R) Core(TM) i7-6700 CPU, 3.40GHz and 16 GB memory.
 The bounds from  \cite{rostami2016quadratic}  are solved by Cplex \cite{cplex20007} and the Bellman-Ford algorithm.

To test and compare various bounding techniques for the QSPP,
we use different types of instances. First of all, we define the random variable
$W(d)$ for  fixed $d \in (0,1]$ such that $\mathcal{P}(W(d) = 0) = 1-d$ and $\mathcal{P}(W(d) = i) = d/10$ for $i \in \{1,\ldots,10\}$.
Now we present the instances as follows.
\begin{enumerate} 

\item[(i)] {\sc grid}1  is a QSPP instance on the grid graph from Example \ref{def:GridGraphs}. The cost $q_{e,f} = q_{f,e} = w_{ef}(d)$
is the realization of the random variable $W_{ef}(d)$ for $d \in (0,1]$, for each pair of distinct arcs $e$ and $f$.
Similarly, we take the linear costs $q_{e,e} = w_{e}(d)$ for each arc $e$.

\item[(ii)] {\sc grid}2  is a QSPP instance on the flow grid graph defined in Example \ref{def:GridGraphs2}. The costs are produced in the same way as for {\sc grid}1.
We note that both {\sc grid}1 and {\sc grid}2 are used in $\cite{buchheim2015quadratic,rostami2016quadratic}$.

\item[(iii)] {\sc grid}3  is a QSPP instance on the flow grid graph.
The difference between {\sc grid}2 and {\sc grid}3  is that {\sc grid}3 depends on two  parameters;
 $d$ and $d'$  that  are related to the horizontal and vertical arcs, respectively.
In particular, we set the quadratic cost $q_{e,f} =q_{f,e} = w_{ef}(d)$ if $e$ and $f$ are horizontal arcs, and $q_{e,f}=q_{f,e}= w_{ef}(d')$ if $e$ or $f$ are vertical arcs.
Similarly, we set the linear cost $q_{e,e} = w_{e}(d)$ if $e$ is a horizontal arcs, and $q_{e,e} = w_{e}(d')$ if $e$ is a vertical arcs.

\item[(iv)] {\sc grid}4  is a QSPP instance on the double-directed grid graph, see Example \ref{def:GridGraphs3}.
For the case that  the arcs $e$ and $f$ are of the form $(v_{i,j},v_{i+1,j})$ or $(v_{i,j},v_{i,j+1})$,
we set the linear costs $q_{e,e} = w_{e}(d)$, $q_{f,f} = w_{f}(d)$, and interaction costs $q_{e,f} =q_{f,e} = w_{ef}(d)$. Here $d \in (0,1]$.
 All other costs are zero.

\item[(v)] {\sc par}-{\sc k}  is a QSPP instance  on the incomplete $K$-partite graph, see Example \ref{def:Kpartite}.
 We set $V_1=\{s\}$, $V_2=\{t\}$, and $|V_{i}| = K$ for $i=2, \ldots, K-1$. Thus,  we  have that $|V| = K(K-2) + 2$ and $|A| =  K^2(K-3) + 2K$.
 The quadratic and linear costs of the arcs in $G_{K}$ are generated  in the same way as for {\sc grid}1 instances.

\end{enumerate}

The size of the grid graph $G_{p,q}$ depends on the parameters $p$ and $q$. If $p=q$, then we say that the associated graph is a {\sc square}  grid graph.
Similarly, a grid graph with $4p=q$ is called a {\sc long} grid graph, and $p = 4q$ is called a {\sc wide} grid graph.
These test graphs are introduced in \cite{kovacs2015minimum}, and used in \cite{rostami2016quadratic}.

All the SDP  bounds are solved approximately by our ADMM-based algorithm, see Section \ref{section_admm}.
The ADMM stops when either the maximum number of iterations $25000$ is reached, or when the tolerance $1e-5$ is reached.
We heuristically take $\gamma = 1.618 $ and $\beta = \sqrt{n}/2$.
We note that smaller tolerance significantly increases running time of the algorithm,  but yields small improvement in the value of the bound.
To solve the QSPP by using the B$\&$B algorithm, we use different tolerance and $SDP_{NLS}$ with additional constraints as described in Section \ref{sect:BnB}. \\

Since we compare our bounds with several bounding approaches from the literature, we briefly outline those.
Rostami et al. \cite{rostami2016quadratic} proposed a reformulation scheme by constructing an equivalent QSPP such that the linear cost has more impact on the solution value.
The procedure can be applied iteratively to obtain increasingly better lower bounds.
We test here this iterative approach.
Our results show that it is the most efficient to stop the iterative procedure when the improvement between the $(k-1)$th and the $k$th iteration is less than $\min\{k, 10\}$ percentage.
This results with the best trade-off between the computed bound and its computational  cost.
The obtained lower bound is denoted here by $RBB$.
We also compute the Gilmore-Lower type bound ($GL$) for the QSPP, see \cite{rostami2016quadratic}.
Finally, we note that $RBB$ at the first iteration equals the $GL$.\\

\noindent
\textbf{Test results.}

In what follows we present and summarize numerical results.
\begin{enumerate}
\item[(i)] We report our results in Table \ref{table:LBdirectedGridgraphs} and  \ref{table:LBdirectedGridgraphsTime} for the {\sc grid}1  instances on
{\sc square} grid graphs. The size of the instances ranges from $220$ to $760$ arcs. For each size, we generate four instances with $d = 0.2 ,0.2, 0.8, 0.8$.

Table \ref{table:LBdirectedGridgraphs} reads as follows. In the first two columns, we list the number of the vertices and the number of arcs in the grid graph $G_{p,q}$, respectively.
In particular, we have $p = q = \sqrt{n}$, and $m = 2pq-p-q$. The third and fourth columns list the Gilmore-Lower bounds and the reformulation-based lower bounds $RBB$, respectively.
The fifth column provides   the lower bound $SDP_{LS+}$. Note that $SDP_{LS+}$ stands for the SDP bound $SDP_{LS}$ with the additional zero pattern, see Section \ref{sect:Special graphs}.
The sixth column provides the lower bounds $SDP_{NLS}$, and the seventh column contains the associated upper bounds.
Here, the upper bound is obtained by solving \eqref{admm_lpupper} where $d$ is derived from the output of the ADMM for $SDP_{NLS}$.
The eighth column presents the lower bound of the root node with the tolerance $0.5$ in the branch-and-bound tree (see also Section \ref{sect:BnB}),
and the last column is the optimal value computed by our branch-and-bound algorithm.
Table \ref{table:LBdirectedGridgraphsTime} presents the computational times and the number of iterations required to obtain bounds in Table \ref{table:LBdirectedGridgraphs}.
The column marked with ($s$) is the running time in seconds, ($it$) the number of iterations, and ($n$) the number of vertices in the branching tree.
This labeling also applies to the other tables.

We observed that both, the GL bounds and the RBB bounds heavily depend on the choice of the parameter $d$.
If $d$ is small, the bound is rather weak. For larger $d$, $RBB$ is usually $50$\% to $80$\% of the optimal value.
It is worth to note that  RBB is a linear programming-based bound.

$SDP_{LS+}$, provides significantly better bounds than those obtained from the reformulation scheme.
However,   $SDP_{NLS}$ yields extremely strong lower bounds.
For almost all of the tested instances with $n \leq 225$, $SDP_{NLS}$ provides tight bounds  in a short time.
Note also that in most of the cases  the $SDP_{NLS}$ bounds are computed faster than the  $SDP_{LS+}$ bounds.
This is due to the fact that the ADMM-based  algorithm requires more iterations to reach the tolerance for a weaker relaxation than for a stronger relaxation.
The upper bounding procedure from Section \ref{ADMM_lbub} yields a good upper bound only when  $SDP_{NLS}$ is close to the optimal value.

Our B$\&$B algorithm is able to solve to optimality instances with $760$ arcs  within $3$ minutes (!).
Also, our branch-and-bound algorithm  solves instances on the $25\times 25$ grid graph (1200 arcs) within $30$ minutes, and
instances on the $26 \times 26$  grid graph (1300 arcs)  within $50$ minutes.

We note  that the interior-point algorithm from Mosek \cite{mosek2010mosek}
solves the SDP  relaxation $SDP_{NLS}$ for an instance with 480 arcs  in $45$ minutes.
Cplex solver cplexqp is  capable to handle the QSPP instances  with  $m \leq 364$ arcs within one hour.

\item[(ii)]
In Table \ref{table:GRID2SQUAREvalue}, \ref{table:GRID2LONGvalue} and \ref{table:GRID2WIDEvalue} we report the results for the  {\sc grid}2  instances.
Those tables read similarly to the Tables \ref{table:LBdirectedGridgraphs} and \ref{table:LBdirectedGridgraphsTime}.
For each different size of  $m$, we generate four {\sc grid}2 instances with $d = 0.2,0.2,0.8,0.8$, respectively.
It turns out that {\sc grid}2 instances are easy instances.
In particular,  the computed SDP bounds presented in Tables  \ref{table:GRID2SQUAREvalue}, \ref{table:GRID2LONGvalue} and \ref{table:GRID2WIDEvalue}  are tight.
We report here only results for large instances with the number of arcs ranging from $1352$ to $2646$.
However, we could solve even larger instances but the computation time would exceed one hour.
Note also that for several instances the GL bound is trivial, i.e., equal to zero.

The reason for being able to solve large {\sc grid}2 instances could be explained as follows.
As the costs of the arcs are independent, a path with longer length is expected to  have a higher cost.
Therefore, an optimal path tends to be the path with a smaller length.
Indeed,  we observe that  the length of the optimal path for any test instance reported
in Table  \ref{table:GRID2SQUAREvalue}, \ref{table:GRID2LONGvalue} or \ref{table:GRID2WIDEvalue}
is longer for at most three arcs from the  minimal path length $q+1$.

\item[(iii)]  Tables \ref{table:GRID2SQUAREvalue2}, \ref{table:GRID2LONGvalue2} and \ref{table:GRID2WIDEvalue2} present results for the {\sc grid}3 instances.
For each different size of  $m$, we generate four {\sc grid}3 instances with $d'= 0,0, 0.1, 0.1$, respectively,  and $d = 0.5$ fixed.
Small $d'$ enables that a path with a length longer than  $q+1$ is more likely to be an optimal path.
This results with more difficult instances than the {\sc grid}2  instances.
Consequently we were only able to compute lower bounds for instances of  up to $2000$ arcs in a reasonable amount of time.
We remark that for the smaller size instances than those presented in the tables, the SDP lower bounds are mostly tight.

The upper bounds reported in Tables \ref{table:GRID2SQUAREvalue2}, \ref{table:GRID2LONGvalue2} and \ref{table:GRID2WIDEvalue2} are obtained by solving the linear programming problem \eqref{admm_lpupper},
or by using simulated annealing. In particular, we write down the better among these two.
The reason that we also use simulated annealing is that the  SDP lower bounds are sometimes not strong enough.
For the test instances for which the SDP bound is tight, we observe that  the length of the optimal path here might be longer up to seventeen more arcs than $q+1$.

\item[(iv)]
 In Table \ref{table:GRID4SQUAREvalue} we report results for the {\sc grid}4 instances. For each size, four instances are generated with $d = 0.2,0.2,0.8,0.8$, respectively.
Similar to {\sc grid}2 and {\sc grid}3 instances, the optimal path tends to have shorter length. Consequently the problem is easy to solve when the cost matrix is dense.
In fact, removing all the arcs of the form $(v_{i,j},v_{i-1,j})$ or $(v_{i,j},v_{i,j-1})$ does not change the optimal value for all the tested instances with high density $d = 0.8$.
To the contrary, an instance with low density $d = 0.2$ is much harder to solve, and none of the lower bounds is tight in this case.
Upper bounds in Table \ref{table:GRID4SQUAREvalue} are obtained by solving \eqref{admm_lpupper}.

\item[(v)]
We report numerical results  for the {\sc par}-{\sc k} instances in Table \ref{table:KpartiteEvalue} and \ref{table:KpartiteTime}.
For each $K$, we generate four instances with $d = 0.8$.
The relaxation $SDP_{LS+}$ provides trivial lower bounds with negative values,
while $SDP_{NLS}$ remains  strong.  In particular,   $SDP_{NLS}$ provides  optimal values for all tested instances with $m \leq 720$.
Note also that RBB and GL give weak bounds for these dense instances.
Upper bounds in Table \ref{table:KpartiteEvalue} are obtained by solving \eqref{admm_lpupper}.

\end{enumerate}

It is also worth mentioning that we tested QSPP instances on the double-directed grid graphs, where
quadratic costs are given as reload costs, see \cite{galbiati2014minimum, gourves2009minimum}.
In particular,  each arc is colored by one of the given $c$ colors and there is  no interaction cost between arcs with the same color.
For so generated  instances, the  GL and RBB bounds equal to the bound obtained by solving the standard shortest path problem using the linear cost.
However, our strongest SDP relaxation provides tight bounds for large instances.

\medskip

To summarize, we present numerical results for many different types of the QSPP instances whose sizes vary from 220 to 2646 arcs.
Since for smaller instances we mostly obtain tight bounds, we do not present those results.
Our results show that the SDP bounds together with the ADMM  make a powerful combination for the computations of strong bounds for the QSPP.
Finally, we show that adding redundant constraints to the SDP relaxation helps to improve the performance of the ADMM.
We  exploit this to develop  an efficient  branch-and-bound algorithm for solving the QSPP to optimality.

\medskip
\medskip
\noindent
{\bf Acknowledgements.}  The authors would like to thank Henry Wolkowicz and Lieven Vandenberghe for useful discussions on the ADMM and facial reduction.

\begin{table}[H]
	\centering
	{\footnotesize
		\begin{tabular}{|cc|ccc|cc|cc|}
			\hline
$n$ & $m$ & $gl$ & $rbb$ & $sdp_{ls+}$ & $sdp_{nls}$ & $sdp_{nls}^{ub}$ & $BnB_{root}^{UB}$ & $BnB(opt)$ \\\hline
121  & 220  & 2  & 21.55 & 132.90 & 205.72 & 206  & 200.28 & 206 \\
121  & 220  & 11  & 25.78 & 128.60 & 181  & 181  & 179.56 & 181 \\
121  & 220  & 750  & 978.63 & 1319.62 & 1374.96 & 1375  & 1373.31 & 1375 \\
121  & 220  & 740  & 950.50 & 1277.39 & 1323.72 & 1324  & 1319.31 & 1324 \\
&    &    &    &    &    &    &    &   \\
144  & 264  & 3  & 27.59 & 173.63 & 248  & 248  & 244.33 & 248 \\
144  & 264  & 17  & 36.50 & 151.08 & 221  & 221  & 217.97 & 221 \\
144  & 264  & 867  & 1161.75 & 1552.80 & 1589  & 1589  & 1585.45 & 1589 \\
144  & 264  & 898  & 1192.38 & 1591.41 & 1611  & 1611  & 1608.93 & 1611 \\
&    &    &    &    &    &    &    &   \\
169  & 312  & 14  & 46.14 & 197.76 & 298.42 & 299  & 297.74 & 299 \\
169  & 312  & 0  & 27.33 & 190.14 & 263  & 263  & 257.03 & 263 \\
169  & 312  & 1042  & 1376.38 & 1902.88 & 2001.62 & 2004  & 1990.20 & 2004 \\
169  & 312  & 1066  & 1399  & 1917.76 & 2064.88 & 2065  & 2041.45 & 2065 \\
&    &    &    &    &    &    &    &   \\
196  & 364  & 3  & 18.14 & 177.12 & 331  & 331  & 324.81 & 331 \\
196  & 364  & 1  & 21.50 & 211.56 & 365.93 & 366  & 357.43 & 366 \\
196  & 364  & 1227  & 1631.88 & 2242.57 & 2328  & 2328  & 2322.88 & 2328 \\
196  & 364  & 1210  & 1617.50 & 2262.75 & 2338  & 2338  & 2336.72 & 2338 \\
&    &    &    &    &    &    &    &   \\
225  & 420  & 13  & 32.19 & 226.66 & 382  & 382  & 369.06 & 382 \\
225  & 420  & 11  & 58.14 & 267.89 & 450.86 & 459  & 435.96 & 459 \\
225  & 420  & 654  & 1088.50 & 1741.08 & 1955.95 & 1957  & 1943.75 & 1956 \\
225  & 420  & 1447  & 1938.50 & 2633.71 & 2794.97 & 2795  & 2776  & 2795 \\
&    &    &    &    &    &    &    &   \\
256  & 480  & 9  & 40.38 & 257.66 & 457  & 457  & 443.35 & 457 \\
256  & 480  & 5  & 41.34 & 297.91 & 489  & 489  & 475.56 & 489 \\
256  & 480  & 661  & 1154.81 & 1893.53 & 2163.13 & 2165  & 2135.41 & 2165 \\
256  & 480  & 1592  & 2118.75 & 3012.44 & 3267.70 & 3276  & 3231.93 & 3276 \\
&    &    &    &    &    &    &    &   \\
289  & 544  & 11  & 44.02 & 307.21 & 543.80 & 544  & 529.74 & 544 \\
289  & 544  & 3  & 59.28 & 341.04 & 552.43 & 553  & 540.22 & 553 \\
289  & 544  & 804  & 1367.06 & 2244.03 & 2515.76 & 2516  & 2495.51 & 2516 \\
289  & 544  & 1794  & 2329.38 & 3375.07 & 3676  & 3676  & 3657.65 & 3676 \\
&    &    &    &    &    &    &    &   \\
324  & 612  & 3  & 39.90 & 322.10 & 616.44 & 638  & 601.15 & 622 \\
324  & 612  & 13  & 63.64 & 359.33 & 649  & 649  & 638.26 & 649 \\
324  & 612  & 858  & 1555.19 & 2496.70 & 2861.30 & 2863  & 2824.80 & 2863 \\
324  & 612  & 1954  & 2645.63 & 3790.17 & 4147.71 & 4149  & 4113.88 & 4149 \\
&    &    &    &    &    &    &    &   \\
361  & 684  & 7  & 38.66 & 355.35 & 682.39 & 786  & 669.61 & 715 \\
361  & 684  & 6  & 32.92 & 342.55 & 680.55 & 681  & 663.38 & 681 \\
361  & 684  & 939  & 1670.13 & 2845.50 & 3274.01 & 3477  & 3246.62 & 3307 \\
361  & 684  & 2260  & 3025.50 & 4292.52 & 4662  & 4662  & 4632.17 & 4662 \\
&    &    &    &    &    &    &    &   \\
400  & 760  & 8  & 32.87 & 393.38 & 746.53 & 747  & 730.06 & 747 \\
400  & 760  & 5  & 42.10 & 428.69 & 809.13 & 858  & 793.11 & 837 \\
400  & 760  & 1052  & 1902.31 & 3109.75 & 3580  & 3580  & 3544.68 & 3580 \\
400  & 760  & 2465  & 3381.13 & 4773.37 & 5224.91 & 5226  & 5184.97 & 5226 \\  \hline
	\end{tabular} } \caption{{\sc grid}1-{\sc square}: bounds and optimal values} \label{table:LBdirectedGridgraphs}
\end{table}

\begin{table}[H]
	\centering
	{\footnotesize
		\begin{tabular}{|cc|c|cc|cc|cc|cc|}
			\hline
$n$ & $m$ & $gl(s)$ & $rbb(s)$ & $rbb(it)$ & $sdp_{ls+}(s)$ & $sdp_{ls+}(it)$ & $sdp_{nls}(s)$ & $sdp_{nls}(it)$ & $BnB(s)$ & $BnB(n)$ \\\hline
121  & 220  & 0.17 & 1.97 & 9  & 7.41 & 3323  & 1.43 & 705  & 4.70 & 17 \\
121  & 220  & 0.15 & 1.56 & 7  & 7.46 & 3381  & 1.35 & 692  & 3.74 & 5 \\
121  & 220  & 0.14 & 0.89 & 4  & 7.32 & 3425  & 1.49 & 785  & 3.94 & 9 \\
121  & 220  & 0.14 & 0.89 & 4  & 7.33 & 3294  & 0.82 & 414  & 4.10 & 9 \\
&    &    &    &    &    &    &    &    &    &   \\
144  & 264  & 0.20 & 1.79 & 6  & 11.16 & 3430  & 2.71 & 932  & 4.88 & 7 \\
144  & 264  & 0.20 & 1.49 & 5  & 11.61 & 3580  & 1.59 & 544  & 5.06 & 9 \\
144  & 264  & 0.20 & 1.23 & 4  & 10.45 & 3232  & 0.87 & 299  & 6.52 & 37 \\
144  & 264  & 0.20 & 1.21 & 4  & 10.88 & 3263  & 0.90 & 305  & 4.79 & 5 \\
&    &    &    &    &    &    &    &    &    &   \\
169  & 312  & 0.27 & 2.87 & 7  & 16.59 & 3454  & 5.70 & 1332  & 7.18 & 11 \\
169  & 312  & 0.26 & 3.19 & 8  & 17.17 & 3633  & 3.37 & 779  & 6.58 & 11 \\
169  & 312  & 0.26 & 1.66 & 4  & 15.80 & 3346  & 2.93 & 674  & 7.25 & 45 \\
169  & 312  & 0.26 & 1.67 & 4  & 15.93 & 3381  & 17.78 & 4027  & 10.39 & 83 \\
&    &    &    &    &    &    &    &    &    &   \\
196  & 364  & 0.36 & 4.22 & 7  & 32.35 & 3518  & 12.38 & 1434  & 8.88 & 11 \\
196  & 364  & 0.37 & 5.40 & 9  & 33.47 & 3593  & 33.17 & 3845  & 8.75 & 11 \\
196  & 364  & 0.35 & 2.45 & 4  & 30.89 & 3396  & 3.18 & 371  & 11.80 & 39 \\
196  & 364  & 0.34 & 2.42 & 4  & 31.51 & 3398  & 3.61 & 427  & 5.04 & 3 \\
&    &    &    &    &    &    &    &    &    &   \\
225  & 420  & 0.46 & 4.89 & 6  & 47.26 & 3590  & 43.26 & 3374  & 18.05 & 31 \\
225  & 420  & 0.46 & 5.74 & 7  & 46.06 & 3662  & 51.24 & 4097  & 22.01 & 29 \\
225  & 420  & 0.44 & 4.15 & 5  & 43.56 & 3492  & 9.24 & 777  & 16.19 & 25 \\
225  & 420  & 0.48 & 3.98 & 4  & 46.07 & 3423  & 17.97 & 1405  & 18.63 & 43 \\
&    &    &    &    &    &    &    &    &    &   \\
256  & 480  & 0.67 & 9.48 & 7  & 65.97 & 3650  & 42.34 & 2660  & 21.01 & 19 \\
256  & 480  & 0.62 & 9.07 & 8  & 60.90 & 3583  & 29.24 & 1814  & 28.31 & 19 \\
256  & 480  & 0.59 & 5.84 & 5  & 64.80 & 3772  & 60.79 & 3799  & 36.42 & 81 \\
256  & 480  & 0.58 & 4.67 & 4  & 59.37 & 3479  & 63.45 & 3707  & 27.70 & 67 \\
&    &    &    &    &    &    &    &    &    &   \\
289  & 544  & 0.99 & 15.93 & 7  & 95.72 & 3726  & 89.33 & 3594  & 30.20 & 11 \\
289  & 544  & 1  & 16.10 & 7  & 99.77 & 3771  & 35.50 & 1432  & 54.73 & 51 \\
289  & 544  & 0.94 & 10.43 & 5  & 93.10 & 3650  & 31.43 & 1326  & 58.73 & 53 \\
289  & 544  & 0.86 & 7.37 & 4  & 80.23 & 3492  & 72.88 & 3268  & 29.83 & 63 \\
&    &    &    &    &    &    &    &    &    &   \\
324  & 612  & 1.12 & 20.67 & 8  & 119.62 & 3841  & 127.83 & 4286  & 69.09 & 61 \\
324  & 612  & 1.12 & 20.18 & 8  & 118.87 & 3808  & 110.88 & 3733  & 66.73 & 35 \\
324  & 612  & 1.04 & 12.84 & 5  & 116.82 & 3767  & 121.60 & 4097  & 56.22 & 97 \\
324  & 612  & 1.06 & 10.40 & 4  & 109.41 & 3535  & 110.56 & 3722  & 44.13 & 63 \\
&    &    &    &    &    &    &    &    &    &   \\
361  & 684  & 1.49 & 28.18 & 8  & 158.36 & 3868  & 168.60 & 4320  & 131.35 & 93 \\
361  & 684  & 1.51 & 24.56 & 7  & 155.70 & 3858  & 152.63 & 3929  & 88.06 & 21 \\
361  & 684  & 1.42 & 17.88 & 5  & 151.74 & 3716  & 157.10 & 3965  & 133.96 & 55 \\
361  & 684  & 1.63 & 16.73 & 4  & 160.54 & 3557  & 62.77 & 1414  & 75.58 & 71 \\
&    &    &    &    &    &    &    &    &    &   \\
400  & 760  & 2.20 & 41.17 & 8  & 221.28 & 4067  & 76.24 & 1536  & 138.45 & 55 \\
400  & 760  & 1.88 & 37.88 & 8  & 237.60 & 4072  & 249.98 & 4349  & 198.43 & 67 \\
400  & 760  & 1.76 & 23.86 & 5  & 201.16 & 3866  & 191.43 & 3833  & 114.30 & 73 \\
400  & 760  & 1.79 & 19.08 & 4  & 192.58 & 3640  & 200.97 & 3980  & 164.86 & 144 \\ \hline
	\end{tabular}} \caption{{\sc grid}1-{\sc square}:  running times and iterations} \label{table:LBdirectedGridgraphsTime}
\end{table}

\begin{table}[H]
	\centering
	{\footnotesize
		\begin{tabular}{|cc|cccc|ccccc|}
			\hline
			$n$ & $m$ & $gl$ & $rbb$ & $sdp_{nls}$ & $sdp_{nls}^{UB}$ & $gl(s)$ & $rbb(s)$ & $rbb(it)$ & $sdp_{nls}(s)$ & $sdp_{nls}(it)$ \\\hline
			678  & 1352  & 0  & 15.30 & 598  & 598  & 8.90 & 337.96 & 13  & 157.04 & 626 \\
			678  & 1352  & 0  & 16.67 & 564  & 564  & 8.78 & 260.26 & 10  & 398.39 & 1628 \\
			678  & 1352  & 1843  & 2433.56 & 3001  & 3001  & 8.66 & 156.87 & 6  & 168.63 & 683 \\
			678  & 1352  & 1918  & 2451.09 & 2988  & 2988  & 8.70 & 157.03 & 6  & 129.40 & 508 \\
			&    &    &    &    &    &    &    &    &    &   \\
			731  & 1458  & 0  & 8.49 & 587  & 587  & 10.93 & 324.82 & 10  & 2209.94 & 7563 \\
			731  & 1458  & 0  & 24.25 & 625  & 625  & 10.94 & 355.14 & 11  & 875.70 & 2986 \\
			731  & 1458  & 1980  & 2492.94 & 3127  & 3127  & 10.75 & 192.05 & 6  & 184.63 & 601 \\
			731  & 1458  & 1940  & 2534.22 & 3267  & 3267  & 10.69 & 192.25 & 6  & 189.17 & 613 \\   \hline
	\end{tabular}} \caption{{\sc grid}2-{\sc square}: bounds, running times, iterations}  \label{table:GRID2SQUAREvalue}
\end{table}

\begin{table}[H]
	\centering
	{\footnotesize
		\begin{tabular}{|cc|cccc|ccccc|}
			\hline
			$n$ & $m$ & $gl$ & $rbb$ & $sdp_{nls}$ & $sdp_{nls}^{UB}$ & $gl(s)$ & $rbb(s)$ & $rbb(it)$ & $sdp_{nls}(s)$ & $sdp_{nls}(it)$ \\\hline
			1158  & 2261  & 233  & 1020.13 & 4648  & 4648  & 38.32 & 738.20 & 6  & 2516.16 & 3090 \\
			1158  & 2261  & 221  & 956.78 & 4641  & 4641  & 38.05 & 735.49 & 6  & 2898.22 & 3543 \\
			1158  & 2261  & 13643  & 16898.06 & 19950  & 19950  & 38.07 & 747.63 & 6  & 744.92 & 866 \\
			1158  & 2261  & 13801  & 17255.94 & 20423  & 20423  & 38.04 & 748.45 & 6  & 1373.16 & 1631 \\
			&    &    &    &    &    &    &    &    &    &   \\
			1298  & 2538  & 259  & 1093.03 & 5224  & 5224  & 52.24 & 1033.70 & 6  & 2578.47 & 2327 \\
			1298  & 2538  & 288  & 1152.31 & 5243  & 5243  & 52.15 & 1036.49 & 6  & 2661.94 & 2392 \\
			1298  & 2538  & 15479  & 19311.78 & 22971  & 22971  & 52.48 & 1052.25 & 6  & 4387.72 & 3910 \\
			1298  & 2538  & 15424  & 19186.53 & 22755  & 22755  & 52.52 & 1051.36 & 6  & 1675.82 & 1450 \\ \hline
	\end{tabular}} \caption{{\sc grid}2-{\sc long}: bounds, running times, iterations} \label{table:GRID2LONGvalue}
\end{table}

\begin{table}[H]
	\centering
	{\footnotesize
		\begin{tabular}{|cc|cccc|ccccc|}
			\hline
			$n$ & $m$ & $gl$ & $rbb$ & $sdp_{nls}$ & $sdp_{nls}^{UB}$ & $gl(s)$ & $rbb(s)$ & $rbb(it)$ & $sdp_{nls}(s)$ & $sdp_{nls}(it)$ \\\hline
			1158  & 2363  & 0  & 2.84 & 204  & 204  & 39.96 & 1595.04 & 13  & 2817.37 & 3046 \\
			1158  & 2363  & 0  & 2.57 & 186  & 186  & 40.67 & 1220.44 & 10  & 4629.67 & 5016 \\
			1158  & 2363  & 782  & 980.56 & 1240  & 1240  & 39.19 & 716.12 & 6  & 1676.66 & 1759 \\
			1158  & 2363  & 785  & 981.91 & 1236  & 1236  & 39.32 & 721.11 & 6  & 1085.42 & 1125 \\
			&    &    &    &    &    &    &    &    &    &   \\
			1298  & 2646  & 0  & 0.52 & 241  & 241  & 55.62 & 1206.43 & 7  & 2733.47 & 2200 \\
			1298  & 2646  & 0  & 2.57 & 206  & 206  & 55.69 & 2918.56 & 17  & 685.66 & 525 \\
			1298  & 2646  & 906  & 1143.25 & 1406  & 1406  & 54.19 & 1004.12 & 6  & 1257.34 & 967 \\
			1298  & 2646  & 884  & 1116.31 & 1385  & 1385  & 54.09 & 1004.72 & 6  & 836.93 & 627 \\  \hline
	\end{tabular}} \caption{{\sc grid}2-{\sc wide}: bounds, running times, iterations} \label{table:GRID2WIDEvalue}
\end{table}

\begin{table}[H]
\centering
{\footnotesize
\begin{tabular}{|cc|cccc|ccccc|}
\hline
$n$ & $m$ & $gl$ & $rbb$ & $sdp_{nls}$ & $ub$ & $gl(s)$ & $rbb(s)$ & $rbb(it)$ & $sdp_{nls}(s)$ & $sdp_{nls}(it)$ \\\hline
363  & 722  & 38  & 64.88 & 576.43 & 582  & 1.69 & 21.13 & 5  & 342.67 & 7304 \\
363  & 722  & 36  & 51.13 & 577.70 & 579  & 1.56 & 20.40 & 5  & 345.94 & 7400 \\
363  & 722  & 54  & 118.97 & 762  & 762  & 1.64 & 25.91 & 6  & 19.83 & 421 \\
363  & 722  & 36  & 111.14 & 768  & 768  & 1.65 & 29.79 & 7  & 33.94 & 729 \\
&    &    &    &    &    &    &    &    &    &   \\
402  & 800  & 27  & 32.25 & 601  & 601  & 1.98 & 21.74 & 4  & 80.55 & 1397 \\
402  & 800  & 26  & 38.88 & 638.45 & 671  & 1.94 & 27.88 & 5  & 400.87 & 6891 \\
402  & 800  & 43  & 113.72 & 882.35 & 888  & 2.04 & 39.54 & 7  & 504.74 & 8737 \\
402  & 800  & 37  & 88.75 & 878.70 & 879  & 2.05 & 39.60 & 7  & 274.39 & 4774 \\    \hline
	\end{tabular}} \caption{{\sc grid}3-{\sc square}: bounds, running times, iterations} \label{table:GRID2SQUAREvalue2}
\end{table}

\begin{table}[H]
	\centering
	{\footnotesize
		\begin{tabular}{|cc|cccc|ccccc|}
			\hline
$n$ & $m$ & $gl$ & $rbb$ & $sdp_{nls}$ & $ub$ & $gl(s)$ & $rbb(s)$ & $rbb(it)$ & $sdp_{nls}(s)$ & $sdp_{nls}(it)$ \\\hline
902  & 1755  & 2548  & 4445.19 & 8297  & 8297  & 17.61 & 280.89 & 5  & 1344.58 & 3634 \\
902  & 1755  & 2625  & 4145.44 & 8347.39 & 8384  & 17.11 & 278.69 & 5  & 4442.90 & 11946 \\
902  & 1755  & 2915  & 4885.81 & 9010  & 9010  & 17.99 & 287.92 & 5  & 844.73 & 2262 \\
902  & 1755  & 2913  & 4848.56 & 9059  & 9059  & 18.24 & 288.45 & 5  & 1572.34 & 4198 \\
&    &    &    &    &    &    &    &    &    &   \\
1026  & 2000  & 2998  & 4812.06 & 9450.67 & 9697  & 25.32 & 420.15 & 5  & 7614.34 & 14625 \\
1026  & 2000  & 3033  & 4992.44 & 9431  & 9431  & 25.03 & 419.66 & 5  & 3414.82 & 6593 \\
1026  & 2000  & 3108  & 5565.56 & 10251  & 10251  & 26.24 & 428.71 & 5  & 3555.13 & 7115 \\
1026  & 2000  & 3121  & 5296.31 & 10261.56 & 10341  & 26.22 & 426.64 & 5  & 8473.91 & 16472 \\  \hline
	\end{tabular}} \caption{{\sc grid}3-{\sc long}: bounds, running times, iterations} \label{table:GRID2LONGvalue2}
\end{table}

\begin{table}[H]
	\centering
	{\footnotesize
		\begin{tabular}{|cc|cccc|ccccc|}
			\hline
$n$ & $m$ & $gl$ & $rbb$ & $sdp_{nls}$ & $ub$ & $gl(s)$ & $rbb(s)$ & $rbb(it)$ & $sdp_{nls}(s)$ & $sdp_{nls}(it)$ \\\hline
678  & 1313  & 2243  & 3472.81 & 6294  & 6294  & 8.72 & 146.23 & 6  & 553.25 & 3040 \\
678  & 1313  & 2150  & 3479.13 & 6207  & 6207  & 7.74 & 143.66 & 6  & 505.02 & 2783 \\
678  & 1313  & 2271  & 3899.16 & 6769  & 6769  & 8.26 & 147.67 & 6  & 619.16 & 3471 \\
678  & 1313  & 2336  & 3926.78 & 6853  & 6853  & 8.46 & 147.64 & 6  & 402.67 & 2180 \\
   &    &    &    &    &    &    &    &    &    &   \\
786  & 1526  & 2477  & 4006.72 & 7325.83 & 7352  & 12.25 & 227.85 & 6  & 6670.28 & 25000 \\
786  & 1526  & 2311  & 3883.09 & 7036  & 7036  & 12.18 & 227.35 & 6  & 594.11 & 2310 \\
786  & 1526  & 2532  & 4312.78 & 7825  & 7825  & 12.81 & 231.30 & 6  & 2403.64 & 9253 \\
786  & 1526  & 2581  & 4394.06 & 7883  & 7883  & 12.81 & 231.37 & 6  & 867.23 & 3290 \\  \hline
	\end{tabular}} \caption{{\sc grid}3-{\sc wide}: bounds, running times, iterations} \label{table:GRID2WIDEvalue2}
\end{table}

\begin{table}[H]
	\centering
	{\footnotesize
\begin{tabular}{|cc|c|cc|ccc|}
\hline
$n$ & $m$ & $gl$ & $sdp_{nls}$ & $sdp_{nls}^{UB}$ & $gl(s)$ & $sdp_{nls}(s)$ & $sdp_{nls}(it)$ \\\hline
121  & 440  & 0  & 149.02 & 265  & 0.26 & 83.69 & 4118 \\
121  & 440  & 0  & 132.23 & 216  & 0.24 & 78  & 3874 \\
121  & 440  & 606  & 1375  & 1375  & 0.34 & 19.16 & 972 \\
121  & 440  & 579  & 1323.72 & 1324  & 0.35 & 7.90 & 398 \\
&    &    &    &    &    &    &   \\
144  & 528  & 0  & 166.33 & 264  & 0.46 & 139.44 & 4360 \\
144  & 528  & 0  & 169.06 & 250  & 0.38 & 130.39 & 4072 \\
144  & 528  & 682  & 1589  & 1589  & 0.50 & 8.67 & 267 \\
144  & 528  & 700  & 1611  & 1611  & 0.50 & 9.41 & 289 \\
&    &    &    &    &    &    &   \\
169  & 624  & 4  & 219.01 & 450  & 0.53 & 203.16 & 4217 \\
169  & 624  & 0  & 207.01 & 325  & 0.52 & 201.29 & 4170 \\
169  & 624  & 816  & 2004  & 2004  & 0.71 & 32.10 & 662 \\
169  & 624  & 811  & 2064.91 & 2065  & 0.74 & 236.86 & 4889 \\
&    &    &    &    &    &    &   \\
196  & 728  & 0  & 212.31 & 506  & 0.77 & 301.13 & 4274 \\
196  & 728  & 0  & 241.46 & 596  & 0.82 & 301.07 & 4263 \\
196  & 728  & 948  & 2328  & 2328  & 1.48 & 25.17 & 350 \\
196  & 728  & 941  & 2338  & 2338  & 1.15 & 27.01 & 374 \\ \hline
\end{tabular}} \caption{{\sc grid}4: bounds, running times, iterations} \label{table:GRID4SQUAREvalue}
\end{table}

\begin{table}[H]
	\centering
	{\footnotesize
\begin{tabular}{|ccc|cc|c|cc|}
	\hline
	$K$ & $n$ & $m$ & $gl$ & $rbb$ & $sdp_{ls+}$ & $sdp_{nls}$ & $sdp_{nls}^{UB}$ \\\hline
	9  & 65  & 504  & 10  & 40.91 & -265.04 & 115.85 & 116 \\
	9  & 65  & 504  & 10  & 40.13 & -266.50 & 113  & 113 \\
	9  & 65  & 504  & 9  & 40.19 & -272.79 & 96  & 96 \\
	9  & 65  & 504  & 8  & 30.84 & -275  & 101  & 101 \\
	&    &    &    &    &    &    &   \\
	10  & 82  & 720  & 3  & 40.80 & -450.79 & 139.99 & 140 \\
	10  & 82  & 720  & 7  & 33.25 & -446.06 & 125  & 125 \\
	10  & 82  & 720  & 4  & 36.59 & -446.21 & 136  & 136 \\
	10  & 82  & 720  & 10  & 37.47 & -444.82 & 137  & 137 \\
	&    &    &    &    &    &    &   \\
	11  & 101  & 990  & 3  & 34.20 & -674.08 & 160.12 & 270 \\
	11  & 101  & 990  & 8  & 34.95 & -667.69 & 168.09 & 210 \\
	11  & 101  & 990  & 5  & 34.50 & -667.78 & 164.21 & 301 \\
	11  & 101  & 990  & 6  & 34.44 & -668.19 & 165.12 & 239 \\     \hline
	\end{tabular}} 	\caption{{\sc par}-{\sc k}: bounds} \label{table:KpartiteEvalue}
\end{table}

\begin{table}[H]
	\centering
	{\footnotesize
\begin{tabular}{|ccc|c|cc|cc|cc|}
	\hline
$K$ & $n$ & $m$ & $gl(s)$ & $rbb(s)$ & $rbb(it)$ & $sdp_{ls+}(s)$ & $sdp_{ls+}(it)$ & $sdp_{nls}(s)$ & $sdp_{nls}(it)$ \\\hline
9  & 65  & 504  & 0.43 & 4.18 & 6  & 151.79 & 3605  & 40.28 & 953 \\
9  & 65  & 504  & 0.44 & 4.11 & 6  & 149.55 & 3515  & 64.35 & 1548 \\
9  & 65  & 504  & 0.45 & 4.16 & 6  & 153.17 & 3647  & 20.51 & 466 \\
9  & 65  & 504  & 0.44 & 4.23 & 6  & 152.13 & 3617  & 24.65 & 559 \\
&    &    &    &    &    &    &    &    &   \\
10  & 82  & 720  & 0.88 & 10.76 & 7  & 399.62 & 3987  & 284.72 & 2720 \\
10  & 82  & 720  & 0.90 & 9.15 & 6  & 389.82 & 3832  & 111.59 & 1028 \\
10  & 82  & 720  & 0.94 & 9.09 & 6  & 372.30 & 3713  & 235.01 & 2203 \\
10  & 82  & 720  & 0.92 & 9.14 & 6  & 375.81 & 3713  & 233.67 & 2256 \\
&    &    &    &    &    &    &    &    &   \\
11  & 101  & 990  & 1.63 & 19.99 & 7  & 980.90 & 4158  & 1580.83 & 6108 \\
11  & 101  & 990  & 1.67 & 20.12 & 7  & 936.08 & 3962  & 1598.11 & 6163 \\
11  & 101  & 990  & 1.69 & 18.36 & 6  & 953.89 & 3985  & 1521.10 & 5859 \\
11  & 101  & 990  & 1.66 & 20.67 & 7  & 965.39 & 4081  & 1532.04 & 5904 \\  \hline
\end{tabular}}\caption{ {\sc par}-{\sc k}:  running times, iterations} \label{table:KpartiteTime}
\end{table}

\newpage

\end{document}